\newcommand{\argmax}[1]{\underset{#1}{\mathrm{argmax}}}
\newcommand{\argmin}[1]{\underset{#1}{\mathrm{argmin}}}
\newcommand{\minimize}[1]{\underset{#1}{\mathrm{minimize}}}
\newcommand{\lmo}{\mathrm{LMO}}
\newcommand{\gap}{\mathrm{gap}}
\newcommand{\co}{\mathbf{co}}
\newcommand{\mD}{\mathcal D}
\newcommand{\mE}{\mathcal E}
\newcommand{\mS}{\mathcal S}
\newcommand{\mN}{\mathcal N}
\newcommand{\R}{\mathbb R}
\newcommand{\bs}{\mathbf s}
\newcommand{\bx}{\mathbf x}
\newcommand{\bX}{\mathbf X}
\newcommand{\bZ}{\mathbf Z}
\newcommand{\bS}{\mathbf S}
\newcommand{\bd}{\mathbf d}
\newcommand{\bz}{\mathbf z}
\newcommand{\bv}{\mathbf v}
\newcommand{\by}{\mathbf y}
\newcommand{\be}{\mathbf e}
\newcommand{\mb}{\mathbf}
\newcommand{\FW}{\textrm{(FW)}}
\newcommand{\FWflow}{\textrm{(FWFlow)}}
\newcommand{\diag}{\mathbf{diag}}
\newcommand{\diam}{\mathbf{diam}}
\newcommand{\sign}{\mathbf{sign}}
\newcommand{\bmat}{\begin{bmatrix}}
\newcommand{\emat}{\end{bmatrix}}
\theoremstyle{plain}
\newtheorem{theorem}{Theorem}[section]
\newtheorem{proposition}[theorem]{Proposition}
\newtheorem{lemma}[theorem]{Lemma}
\newtheorem{corollary}[theorem]{Corollary}
\theoremstyle{definition}
\theoremstyle{remark}
\icmltitlerunning{Continuous Time FW Does Not Zig-Zag,
But Multistep Methods Do Not Accelerate}
\begin{document}

\twocolumn[
\icmltitle{Continuous Time Frank-Wolfe Does Not Zig-Zag,\\
But Multistep Methods Do Not Accelerate}

% It is OKAY to include author information, even for blind
% submissions: the style file will automatically remove it for you
% unless you've provided the [accepted] option to the icml2022
% package.

% List of affiliations: The first argument should be a (short)
% identifier you will use later to specify author affiliations
% Academic affiliations should list Department, University, City, Region, Country
% Industry affiliations should list Company, City, Region, Country

% You can specify symbols, otherwise they are numbered in order.
% Ideally, you should not use this facility. Affiliations will be numbered
% in order of appearance and this is the preferred way.
%\icmlsetsymbol{equal}{*}

\begin{icmlauthorlist}
\icmlauthor{Zhaoyue Chen }{yyy}
\icmlauthor{Mokhwa Lee }{yyy}
\icmlauthor{Yifan Sun}{yyy}
\end{icmlauthorlist}

\icmlaffiliation{yyy}{Department of Computer Science, Stony Brook University, New York, USA}

\icmlcorrespondingauthor{Zhaoyue Chen}{zhaoychen@cs.stonybrook.edu}
\icmlcorrespondingauthor{Yifan Sun}{ysun@cs.stonybrook.edu}

% You may provide any keywords that you
% find helpful for describing your paper; these are used to populate
% the "keywords" metadata in the PDF but will not be shown in the document
\icmlkeywords{Frank Wolfe, continuous time optimization, multistep methods}

\vskip 0.3in
]

% this must go after the closing bracket ] following \twocolumn[ ...

% This command actually creates the footnote in the first column
% listing the affiliations and the copyright notice.
% The command takes one argument, which is text to display at the start of the footnote.
% The \icmlEqualContribution command is standard text for equal contribution.
% Remove it (just {}) if you do not need this facility.

%\printAffiliationsAndNotice{}  % leave blank if no need to mention equal contribution
%\printAffiliationsAndNotice{\icmlEqualContribution} % otherwise use the standard text.

\begin{abstract}
The Frank-Wolfe algorithm has regained much interest
in its use
%since it has been successfully used
in structurally constrained machine learning applications. However, one major limitation of the Frank-Wolfe algorithm is the slow local convergence property due to the zig-zagging behavior.
We observe that this zig-zagging phenomenon can be viewed as an artifact of discretization, as when the method is viewed as an Euler discretization of a continuous time flow, that flow does not zig-zag.
%In contrast to previous methods that directly break the behavior, we figure out the intuition behind this behavior, which is an artifact of truncation discretization error. 
For this reason, we propose multistep Frank-Wolfe variants based on discretizations of the same flow whose truncation errors decay as $O(\Delta^p)$, where $p$ is the method's order. 
This strategy ``stabilizes" the method, and allows tools like line search and momentum to have more benefit. However, in terms of a convergence rate, our result is ultimately negative, suggesting that no Runge-Kutta-type discretization scheme can achieve a better convergence rate than the vanilla Frank-Wolfe method. 
We believe that this analysis adds to the growing knowledge of flow analysis for optimization methods, and is a cautionary tale on the ultimate usefulness of multistep methods.
\end{abstract}

\section{Introduction}
The Frank Wolfe algorithm (FW) or the conditional gradient algorithm \citep{LEVITIN19661} is a popular method in constrained convex optimization. It was first developed in \citet{frank1956algorithm} for maximizing a concave quadratic programming problem with linear inequality constraints, and later extended in  \citet{dunn1978conditional}  to minimizing more general smooth convex objective function on a bounded convex set.
%, and open-loop step size strategy is proposed to avoid the requirement for the knowledge of smoothness constant $L$.
More recently, \citet{jaggi2013revisiting} analyzes the FW method over general convex and continuously differentiable objective functions with convex and compact constraint sets,  and illustrates that when a sparse structural property is desired, the per-iteration cost can be much cheaper than computing projections. 
This has spurred a renewed interest of the FW method to broad applications in machine learning and signal processing \citep{LacosteJulien2013BlockCoordinateFO,Joulin2014EfficientIA,Krishnan2015BarrierFF,Freund2017AnEF}.

%The Frank-Wolfe method (also known as the conditional gradient method) \cite{} is often used in constrained convex optimization as a cheap per-iteration method, and often yielding sparse solutions. 
Specifically, the Frank-Wolfe method attacks problems of form
\begin{equation}
\minimize{\bx\in \mD}\quad f(\bx)
\label{eq:fw-main}
\end{equation}
where $f:\R^n\to\R$ is an everywhere-differentiable function and $\mD$ is a convex compact constraint set, via the repeated iteration 
\[
\begin{array}{rcl}
\bs^{(k)} &=& \argmin{\bs\in\mD} \; \nabla f(\bx^{(k)})^T\bs\\
\bx^{(k+1)} &=& \gamma^{(k)} \bs^{(k)} + (1-\gamma^{(k)}) \bx^{(k)}.
\end{array}
\qquad \FW
\]
The first operation is often referred to as the \emph{linear minimization oracle (LMO)}, and is the support function of $\mD$ at $-\nabla f(\bx)$:

\[
\lmo_\mD(\bx) := \argmin{\bs\in \mD}\;\nabla f(\bx)^T\bs. 
\]

In particular, computing the LMO is often computationally cheap, especially when $\mD$ is the level set of a sparsifying norm, e.g. the 1-norm or the nuclear norm. 
In this regime, the advantage of such projection-free methods over methods like projected gradient descent is the cheap per-iteration cost.
However, the tradeoff of the cheap per-iteration rate is that the convergence rate, in terms of number of iterations $k$, is often much slower than that of projected gradient descent \citep{lacoste2015global,freund2016new}. While various acceleration schemes \citep{lacoste2015global} have been proposed and several improved rates given under specific problem geometry \citep{garber2015faster}, by and large the ``vanilla" Frank-Wolfe method, using the ``well-studied step size" $\gamma^{(k)} = O(1/k)$, can only be shown to reach $O(1/k)$ convergence rate in terms of objective value decrease \citep{Canon1968ATU,jaggi2013revisiting,freund2016new}

\paragraph{The Zig-Zagging phenomenon}
The slowness of the Frank-Wolfe method is often explained as a consequence of potential a ``zig-zagging" phenomenon.
In particular, when the true solution lies on a low dimensional facet and the incoming iterate is angled in a particular way, the method will alternate picking up vertices of this facet, causing a ``zig-zagging" pattern. In fact, methods like the Away-Step Frank Wolfe \citep{lacoste2015global} are designed to counter exactly this, by forcing the iterate to change its angle and approach more directly. We are inspired by the problem, but propose to solve it a different way: by reducing the discretization error from the underlying continuous flow, which we argue does not zig-zag.

%Although FW algorithm has regained popularity due to the empirical success, a major limitation of FW algorithm is slow convergence rate. (\cite{freund2016new}) derive slow convergence rate for FW algorithm with arbitrary step size. Specifically, the well studied step size $\frac{2}{2+t}$ where $t$ is the number of iteration or step size determined by line search maintains $O(\frac{1}{t})$ convergence rate.

%To accelerate the convergence rate, (\cite{lacoste2015global}) propose that vanilla FW algorithm achieves a faster convergence rate of $O(\frac{1}{t^2})$ when the objective funciton and the feasible set are both strongly convex. When the solution lies in the boundry, the convergence rate of FW algorithm is sublinear $O(\frac{1}{t})$ (\cite{Canon1968ATU}). To overcome this drawback,  (\cite{garber2015faster}) study several variants of FW algorithm with exact line search, enjoying global linear convergence rate under a weaker strong convexity condition over polytopes.

\paragraph{Continuous-time optimization}
Recent years have witnessed a surge of research papers connecting  dynamical systems with optimization algorithms, generating more intuitive analyses and proposing accelerations. For example, in \citet{su2016differential}, the Nesterov accelerated gradient descent and Polyak Heavy Ball schemes are shown to be discretizations of a certain second-order ordinary differential equation (ODE), whose tunable vanishing friction pertains to specific parameter choices in the methods.
%is approximately equivalent to the continuous limit of Nesterov's scheme, so the continuous time ODE can be served as a tool to analyze intuitions behind the Nesterov's accelerated gradient (NAG) methods. 
Inspired by this analysis, several papers \citep{jingzhao2018direct,shi2019acceleration} have proposed improvements using advanced discretization schemes; \cite{jingzhao2018direct} uses Runge-Kutta integration methods to improve  accelerated gradient methods, and \cite{shi2019acceleration} shows a generalized Leapfrog acceleration scheme which uses a semi-implicit scheme to achieve a very high resolution approximation of the ODE.
In general, however, no such analysis has been made on Frank-Wolfe methods; our result gives good reason for this, and proves that Runge Kutta multistep methods have a lower bound equal to that of the vanilla Frank-Wolfe method. 

%\textcolor{green}{Leapfrog integrator is a second order symplectic integrator, but this paper focus on the first order symplectic integrator}  \red{ which allows arbitrarily high precision in discretization} \red{[check this.]}\textcolor{green}{\cite{shi2019acceleration} imply that symplectic scheme and high-resolution ODE are two crucial factors for achieving acceleration.}\red{What does it mean by symplectic scheme?}\textcolor{green}{The symplectic scheme is also called semi-implicit Euler scheme, so the scheme is implicit in one equation and explicit in the other equation. }

\paragraph{Continuous-time frank-Wolfe}
In this work,  we view the method \FW~as an Euler's discretization of the differential inclusion
\[
\begin{array}{rcl}
\dot x(t) &=& \gamma(t)(s(t)-x(t)),\\  s(t) &\in& \argmin{s\in \mD} \nabla f(x(t))^T(s-x(t))
\end{array}
\quad\FWflow
\]
where $x(t)$, $s(t)$, and $\gamma(t)$ are continuations of the iterates $\bx^{(k)}$, $\bs^{(k)}$, and coefficients $\gamma^{(k)}$; i.e. $\bx^{(k)} = x( k\Delta )$ for some discretization unit $\Delta$. This was first studied in \citet{jacimovic1999continuous}, and is a part of the construct presented in \citet{diakonikolas2019approximate}. However, neither paper considered the affect of using advanced discretization schemes to better imitate the flow, as a way of improving the method. 
From analyzing this system, we reach three conclusions through numerical experimentation:
\begin{itemize}[leftmargin=*]
\item (Positive result.) We show that for a class of mixing parameters $\gamma(t)$,  \FWflow~can have an arbitrarily fast convergence rate given an aggressive enough mixing parameters.
%, but the  discretization error causes the same simple scheme \FW~to be upper bounded by $O(1/k)$. 

\item (Interesting result.) We show qualitatively that, on a number of machine learning tasks, unlike \FW, the iterates $x(t)$ in \FWflow~usually do not zig-zag. 
%From this observation, we explore the hypothesis that eliminating zig-zagging will close the gap between the flow and method rates. 

\end{itemize}

\paragraph{Multistep methods} While continuous time analyses offer improved intuition in idealized settings, it does not provide a usable method. 
We therefore investigate improved discretization schemes applied to Frank-Wolfe, and explore if removing discretization error can improve the method's performance. In particular, we follow the example of \cite{jingzhao2018direct} and explore a family of Runge-Kutta (RK) multi-step methods, each with much lower discretization error than the basic explicit Euler's method. Here, we make the following remarkable discoveries:
 \begin{itemize}[leftmargin=*]
 
 \item (Negative result.) We show that over for a particular popular class of multistep methods (Runge Kutta methods) no acceleration can be made when the step size flow $\gamma(t) = O(1/t)$. Intuitively, this is because \emph{any} discretization error that does not decay faster than $1/t$ will ultimately dominate the convergence rate. Specifically, we give both the upper and lower bound on all Runge-Kutta discretization schemes over \FWflow, and show that they are equal to $O(1/k)$.
 
 %\item We observe consistent speedup of the \emph{convergence rate} of multistep methods over the vanilla FW (which we henceforth just refer to as FW). However, when we account for the extra gradient and LMO calls required, the two methods appear comparable. 
 
 \item (Usefulness.) However, higher order multistep methods tend to have better \emph{search directions}, which accounts for less zig-zagging. This leads to better performance when mixed with line search or momentum methods, both of which benefit from this advantage.
 
\end{itemize}

%\paragraph{Contributions} We offer the following contributions.
%\begin{enumerate}[leftmargin=*]
%    \item We prove that, using a generalized step size $\gamma_k = \frac{c}{c+k}$, then no value of $c$ can improve the rate of \FW~ beyond $O(1/k)$; however, the rate of \FWflow~ is arbitrarily fast, as $O(1/k^c)$. This illustrates a stark contrast in the rates, caused solely by discretization error. It is worth noting that this quality does not appear in the more popular gradient flow analysis. 
    
%    \item We quantify ``zig-zagging" as the mean deviation of a trajectory from its ``average trajectory", and show on several datasets that this deviation decreases as $O(\Delta)$ (where $\Delta$ is the discretization unit). This is unsurprising, but establishes nontrivially that zig-zagging is a discretization trait.
    
 %   \item We show numerically that better discretization schemes (midpoint method and various Runge-Kutta methods) not only remove zig-zagging \emph{at each iteration}, but almost always improve convergence rate by a constant factor. However, the trade-off is a constant factor increase in number of gradient/LMO calls needed. 
    
  %  \item Finally, by observing that no zig-zagging leads to better search direction, we leverage an improved benefit of line search and momentum acceleration in certain problems, offering a family of overall improved methods.
    
%\end{enumerate}

\section{The Frank-Wolfe method}
The method \FW~has become popular in structural optimization, when in particular one desires a solution $\bx^*$ that is a sparse convex combination of a special class of vectors (often called \emph{atoms}). These atoms then form the vertices of $\mD$, and the operation $\lmo$ extracts the vertex that is most correlated with the steepest descent direction at each iteration. 
This vertex $\bs^{(k)}$ is then mixed into the iterate $\bx^{(k)}$ with mixing coefficient $\gamma^{(k)}$, whose decay rate plays an important role in the convergence rate of \FW. 

\paragraph{Sparse optimization.}
For example, in sparse element-wise optimization, the constraint $\bx\in \mD$ is often manifested as a limit on the sparse norm of $\bx$, e.g. $\|\bx\|_1\leq \alpha$ for some hyperparameter $\alpha \geq 0$. The LMO in this case is simply
\begin{eqnarray*}
\lmo_\mD(\bx) &=& -\alpha\, \sign((\nabla f(\bx))_j)\,\be_j,\\
j &=& \argmax{i}\, |(\nabla f(\bx))_j|
\end{eqnarray*}
where $\be_k$ is a standard basis vector (one-hot at position $k$). 
Notice that the complexity of this LMO operation is simply that of finding the index of the largest element in a vector,  $O(n)$. In contrast, a projection on the one-norm ball is considerably more involved.

\paragraph{Low-rank optimization.} The low complexity benefit of LMOs is even starker in the case of low-rank optimization, often modeled by forcing $\|X\|_*\leq \alpha$, where $\|X\|_*$ is the nuclear norm (maximum singular value) of a matrix variable $X$.  Here, both the projection on the level set \emph{and} the proximal operator of the nuclear norm requires full spectral calculations. In contrast, the LMO only requires knowing the eigenspace associated with the \emph{largest} singular value, an operation often achieved much more efficiently. 

\paragraph{The tradeoff}
While the per-iteration complexity of FW can be appealing, the downside is that the convergence rate is usually slow, compared against projected gradient methods \citep{jaggi2013revisiting,lacoste2015global,freund2016new}; in particular, without special assumptions beyond smoothness, away steps, or use of line search, the best known convergence rate of  \FW~is $O(1/k)$. 
%In fact, we will later show that, for a generalized class of step sizes, this is the best possible rate.

%\begin{proposition}\label{prop:fwlowerbound}
%Suppose that $\gamma_k = \frac{c}{c+k}$, for some constant $c\geq 1$. Then, for any choice of $c$, the method \FW~has a lower bound of $f(\bx^{(k)})-f^* = \Omega(1/k)$.
%\end{proposition}
%The proof is given in the appendix, but can be seen simply by tracking the rate of decay of the iterates over the 1-D problem
%\[
%\minimize{-1\leq x\leq 1} \; \tfrac{1}{2} x^2
%\]
%with any $\bx^{(0)} \neq 0$.
%\red{Revisit this. should we not just say it's a consequence of RK?}
%Note that in this example problem, the vanilla FW method oscillates between $\bs^{(k)} = 1$ and $\bs^{(k)} = -1$ at every step.

%However, the same fundamental bad rate  disappears in the continuous time limit.

In this paper, we explore this fundamental rate as it applies to the continuous \FWflow~and its various discretizations.
\section{Continuous time Frank-Wolfe}
The use of continuous time analysis is not new \citep{jacimovic1999continuous, su2016differential, shi2019acceleration}; importantly, continuous-time analysis of usual gradient method and its accelerated rate was shown to have the same convergence rate as its discretized version \citep{su2016differential}. 
%\green{Therefore,} although finer discretized versions were more robust against less smooth functions, in that case it did not seem as though overcoming discretization error would fundamentally improve the convergence rate. 
The same is not true for the Frank-Wolfe method. 

\subsection{Continuous Frank-Wolfe flow rate}
\begin{proposition}[Continuous flow rate]
\label{prop:fwflow-rate}
Suppose that $\gamma(t) = \frac{c}{c+t}$, for some constant $c \geq 1$. Then the flow rate of \FWflow~ has an upper bound of 
\begin{equation}
\frac{f(x(t))-f^*}{f(x(0))-f^*} \leq  \left(\frac{c}{c+t}\right)^c =  O\left(\frac{1}{t^c}\right).
\label{eq:cont-upper-rate}
\end{equation}
\end{proposition}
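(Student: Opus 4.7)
The plan is to follow the classical Lyapunov / differential inequality approach: define the suboptimality $h(t) := f(x(t)) - f^*$, bound $\dot h(t)$ in terms of $h(t)$ using the LMO property plus convexity, and then integrate the resulting scalar ODE.

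First I would differentiate $h(t)$ along the flow \FWflow. By the chain rule,
\[
\dot h(t) = \nabla f(x(t))^T \dot x(t) = \gamma(t)\, \nabla f(x(t))^T (s(t) - x(t)).
\]
Since $s(t)$ minimizes $\nabla f(x(t))^T s$ over $\mathcal D$, and $x^* \in \mathcal D$, we get $\nabla f(x(t))^T(s(t)-x(t)) \le \nabla f(x(t))^T(x^* - x(t))$. Convexity of $f$ then yields $\nabla f(x(t))^T(x^* - x(t)) \le f^* - f(x(t)) = -h(t)$. Substituting $\gamma(t) = c/(c+t)$ gives the key differential inequality
\[
\dot h(t) \le -\frac{c}{c+t}\, h(t).
\]

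Next I would integrate this inequality by Gronwall's lemma (or equivalently by dividing by $h(t)$, assuming positivity, and integrating $\tfrac{d}{dt}\log h(t) \le -c/(c+t)$ from $0$ to $t$). This gives
\[
\log \frac{h(t)}{h(0)} \le -c\, \log\!\left(\frac{c+t}{c}\right),
\]
which upon exponentiation is exactly the claimed bound $h(t)/h(0) \le \bigl(c/(c+t)\bigr)^c$.

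The only place that requires care is the regularity of $x(t)$ and $s(t)$: the LMO map is generally set-valued, so \FWflow\ is a differential inclusion and $t\mapsto s(t)$ need not be continuous. The main (but still mild) obstacle is therefore justifying the chain rule for $h$. I would handle this by appealing to the absolute continuity of $x(t)$ guaranteed by the differential inclusion's right-hand side being bounded on $\mathcal D$ (since $\mathcal D$ is compact), so $\dot x(t)$ exists almost everywhere, and the inequality $\dot h(t) \le -\gamma(t) h(t)$ holds a.e.\ in $t$; Gronwall's inequality then applies in its absolutely continuous form, yielding the same pointwise estimate. The edge case $h(0)=0$ is trivial, and if $h(t_0)=0$ for some $t_0>0$ the bound is preserved for all later $t$ by the same inequality, so the argument is complete.
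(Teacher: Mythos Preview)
Your proof is correct and follows essentially the same route as the paper: define the suboptimality $\mE(t)=f(x(t))-f^*$, use convexity together with the LMO property to obtain $\dot\mE(t)\le -\gamma(t)\mE(t)$, and integrate (the paper writes the conclusion as $\mE(t)\le \mE(0)\exp(-\int_0^t\gamma)$ before substituting $\gamma(t)=c/(c+t)$). Your additional discussion of regularity for the differential inclusion and the edge cases $h(0)=0$, $h(t_0)=0$ goes beyond what the paper addresses, but the core argument is identical.
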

\begin{proof}
\footnote{Much of this proof is standard analysis for continuous time Frank-Wolfe, and is also presented in \cite{jacimovic1999continuous}. }
Note that by construction of $\nabla f(x)^Ts = \displaystyle\min_{y\in\mD}\, \nabla f(x)^Ty$, and since $f$ is convex, 
%\textcolor{green}{Is it $s = \argmin{\mD}\, \nabla f(x)^T(s-x)$}
%\red{equivalent, $\nabla f(x)^Tx$ is constant.}
\[
f(x) - f(x^*) \leq \nabla f(x)^T(x-x^*) \leq \nabla f(x)^T(x-s).
\] 
Quantifying the objective value error as $\mE(t) = f(x(t))-f^*$ 
%(where $f^* = f(x^*)= \min_{x\in \mD} f(x)$)
(where $f^* = \min_{x\in \mD} f(x)$ is attainable) then 
\begin{eqnarray*}
\dot \mE(t) &=& \nabla f(x(t))^T\dot x(t) \\
&\overset{\FWflow}{=}& \gamma(t) \nabla f(x(t))^T(s(t)-x(t)).
\end{eqnarray*}
Therefore, 
%Also, from $r(t)\geq 0$ and $f(x)-f(x^*)=\mE(t)$, we know that
\begin{eqnarray*}
\dot \mE(t) &=& -\gamma(t) \underbrace{\nabla f(x(t))^T(x(t)-s(t))}_{\geq f(x)-f(x^*)}\\
%&\leq & -\gamma(t) (f(x)-f(x^*))\\
&\leq& -\gamma(t) \mE(t) 
\end{eqnarray*}
giving an upper rate of
\[
\mE(t) \leq \mE(0) e^{-\int_0^t\gamma(\tau)d\tau}.
\]
%\vspace{-0.07cm}
%\red{The above inequality implies
%\[
%\frac{\dot \mE(t)}{\mE(t)} \leq -\gamma(t) \quad\Rightarrow\quad \int_{0}^{T} \frac{\dot \mE(t)}{\mE(t)}dt \leq \int_{0}^{T} -\gamma(t)dt \quad\Rightarrow\quad ln(\frac{\mE(T)}{\mE(0)}) \leq \int_{0}^{T} -\gamma(t)dt.
%\][I think this is too much detail for paper]}
%Therefore, \FWflow~satisfies
%\vspace{-0.05cm}
%\[
%\dot \mE(t) \leq -\gamma(t) \mE(t)\quad  \Rightarrow \quad
% \mE(t) \leq \mE(0) e^{-\int_0^t\gamma(\tau)d\tau}.
%\]
In particular, picking the ``usual step size sequence"  $\gamma(t) = \tfrac{c}{t+c}$ gives the proposed rate \eqref{eq:cont-upper-rate}.
%, then 
%$\int_0^t\gamma(\tau) d\tau = c(\log(t+c)-\log(c)) $
%and 
%the convergence rate of \FWflow~is $\mE(t) \leq \mE(0)\left(\frac{c}{c+t}\right)^c$. 
%\hspace{-0.1cm}
\end{proof}

Note that this rate  is \emph{arbitrarily fast}, as long as we keep increasing $c$.
This is in stark contrast to the usual convergence rate of the Frank-Wolfe method, which in general \emph{cannot} improve beyond $O(1/k)$ for \emph{any} $c$. Later, we will see that this is true for \emph{all} Runge-Kutta type methods (of which \FW~is one type).
%Intuitively, we can see this from the usual convergence proof: the $L$-smoothness of $f$ gives
%\begin{eqnarray*}
%f(x^{(k+1)})-f(x^{(k)}) &\leq& \gamma_k \nabla f(x^{(k)})^T(s^{(k)}-x^{(k)}) + \frac{L\gamma_k}{2}\|s^{(k)}-x^{(k)}\|_2^2\\
%&\leq& -\gamma_k (f(x^{(k)})-f^*) + \frac{\gamma_kL D^2}{2}
%\end{eqnarray*}
%where $D = \max_{x\in\mD,s\in \mD} \|x-s\|_2$ the radius of $\mD$. In particular, this inequality \emph{can be tight}. 
%With  $\mE_k = f(\bx^{(k)})-f^*$,
%\begin{equation}
%\mE_{k+1}-\mE_k \leq (1-\gamma_k)  \mE_k(x^{(k)}) + \frac{\gamma_k LD^2}{2}.\label{eq:usual_fw_proof}
%\end{equation}
%At this point, we see that if $\gamma_k = \frac{c}{c+k} = O(1/k)$, then this technique at best gives an upper bound of $\mE_k \leq \frac{c}{c+k}= O(1/k)$. 
Figure \ref{fig:limit_continuous} shows this continuous rate as the limiting behavior of \FW, where the discretization steps $\Delta\to 0$. As proposed, the limiting behavior indeed improves with growing $c$; however, Figure \ref{fig:limit_continuous} also alludes that this idealized rate does not occur in local convergences of actual discretizations. 
\begin{figure*}
    \centering
    \includegraphics[width=5.5in,trim={2.5cm 8cm 2.5cm 0},clip]{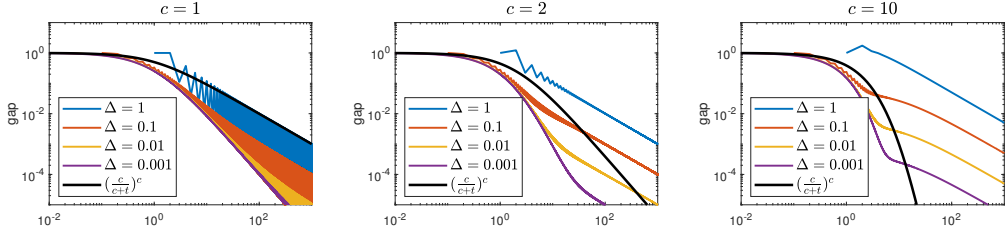}
    \caption{\textbf{Continuous vs discrete.} A comparison of the numerical error vs 
    compared with derived rate, on \eqref{eq:triangle_example}. The black curve shows the upper bound on the flow rate, compared against simulated method rates for smaller discretization units. The two-stage behavior of the curves is intriguing, as it seems there is a fundamental point where discretization error takes over, and forces the $O(1/k)$ rate to manifest.}
    \label{fig:limit_continuous}
\end{figure*}

\subsection{Continuous time Frank Wolfe does not zig-zag}

We first provide Figure \ref{fig:triangle_zigzag}  as an example of zig-zagging behavior over a toy problem 
\begin{equation}
\min_{\bx\in \mD}\;  \tfrac{1}{2}\|\bx-\bx^*\|_2^2,\quad  \mD:=\co \{(-1,0),(1,0),(0,1)\}
\label{eq:triangle_example}
\end{equation}
and $\co(\mS)$ is the convex hull of the set $\mS$.
From the examples in Figure \ref{fig:triangle_zigzag}, we are tempted to conclude that, indeed, zigzagging is a discretization phenomenon, and not a characteristic in the limiting  \FWflow.

\begin{figure}
    \centering
        \includegraphics[height=2.5in,trim={1.5cm 1cm 1.5cm .5cm},clip]{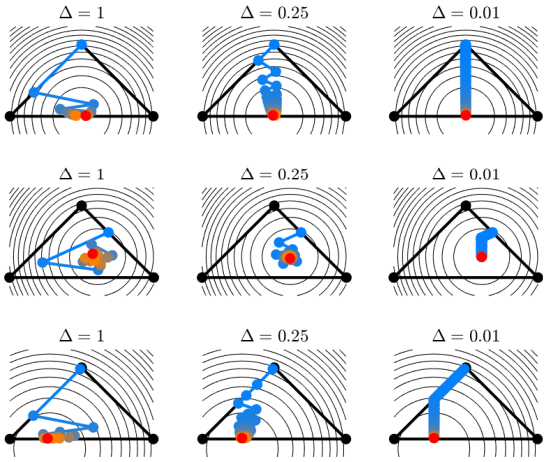}\\
       \includegraphics[height=2.5in,trim={0cm 0cm 1.5cm 0cm},clip]{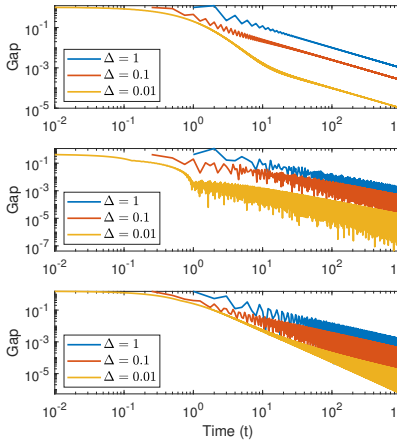}
    \caption{\textbf{Zig-zagging behavior on \eqref{eq:triangle_example}.} Here we show three random choices of $\bx^{(0)}$ and $\bx^*$, leading to three different but all zig-zaggy trajectories. Three discretizations are given; $\Delta = 1$ corresponds to \FW; visually, $\Delta=0.01$ approaches \FWflow.}
    \label{fig:triangle_zigzag}
\end{figure}

Figure \ref{fig:continuous_no_zigzag} quantifies this notion more concretely. We first propose to measure ``zig-zagging energy" by averaging the deviation of each iterate's direction across $k$-step directions, for $k = 1,...,W$, for some measurement window $W$:
%To evaluate this more extensively, we offer the following ``zig-zag measure", in which we measure the average energy of points, projected away from their smoothed version over various window sizes $W$:
\begin{multline*}
\mE_{\mathrm{zigzag}}(\bx^{(k+1)},...,\bx^{(k+W)}) =\\ \frac{1}{W-1}\sum_{i=k+1}^{k+W-1} \Big\|\underbrace{\left(I-\frac{1}{\|{\bar\bd}^{(k)}\|_2}\bar\bd^{(k)}({\bar\bd}^{(k)})^T\right)}_{\mb Q}\bd^{(i)}\Big\|_2,
\end{multline*}
where $\bd^{(i)} = \bx^{(i+1)}-\bx^{(i)}$ is the current iterate direction and $\bar\bd^{(k)} = \bx^{(k+W)}-\bx^{(k)}$ a ``smoothed" direction. The projection operator $\mb Q$ removes the component of the current direction  in the direction of the smoothed direction, and we measure this ``average deviation energy."
We divide the trajectory into these window blocks, and report the average of these measurements $\mE_{\mathrm{zigzag}}$ over $T=100$ time steps (total iteration = $T/\Delta$). 
Figure \ref{fig:continuous_no_zigzag} (top table) exactly shows this behavior, where the problem is sparse constrained logistic regression minimization over several machine learning classification datasets~\citep{guyon2004result} (Sensing (ours), Gisette \footnote{Full dataset available at \url{https://archive.ics.uci.edu/ml/datasets/Gisette}. We use a subsampling, as given in 
\url{https://github.com/cyrillewcombettes/boostfw}.
} and Madelon \footnote{Dataset: \url{https://archive.ics.uci.edu/ml/datasets/madelon}})
are shown in Fig. \ref{fig:continuous_no_zigzag}.

%This is somewhat unsurprising, since zig-zagging is exactly what we expect discretization error to look like, but it is still worth formalizing this notion, as there exists a notion of a ``zig-zagging continuous flow" (see Figure \ref{fig:continuous_no_zigzag}, left). 

%As a small example,
%A known problem for standard FW is ...
%To illustrate it, we can apply the standard Frank Wolfe (FW) algorithm into a simple optimization problem below \ref{equ:1}. The initial point is $(0,1)$, and we can easily see that the optimum is $(0,0)$.
%The simulated trajectory of the vanilla Frank-Wolfe method is shown in figure \ref{fig:triangle-1}. 
%Random variations of this problem all show zig-zagging behavior of the vanilla FW implementation, with the zig-zagging disappearing as $\Delta \to 0$ (approaching \FWflow). 

%The algorithm oscillates around the optimum point and generates a zig-zag-like trajectory, a condition that is often attributed to the slow convergence of the standard FW algorithm. However, the continuous time FW (Fig. \ref{fig:triangle-2} 
%with very small step size, e.g. $\Delta t=0.001$, we have a different result in figure \ref{fig:2}. This time 
%the algorithm directly heads to the optimum point, in almost a straight line. %Although the oscillation around optimum point is avoided by the continuous time FW, the algorithm is very slow because of the very small step size.

%We quantify this zig-zagging behavior by measuring the mean energy of a curve against its ``average trajectory" (Figure \ref{fig:continuous_no_zigzag}). 
From this experiment, we see that any Euler discretization of \FWflow~will always zig-zag, in that the directions often alternate. But, by measuring the deviation across a windowed average, we see two things: first, the deviations converge to 0 at a rate seemingly linear in $\Delta$, suggesting that in the limit as $\Delta \to 0$, the trajectory is smooth. Furthermore, since these numbers are more-or-less robust to windowing size, it suggests that the smoothness of the continuous flow is on a macro level.

\begin{figure*}
    \centering
    \begin{tabular}{lr}
    \begin{minipage}{.2\textwidth}
    \includegraphics[width=1.5in]{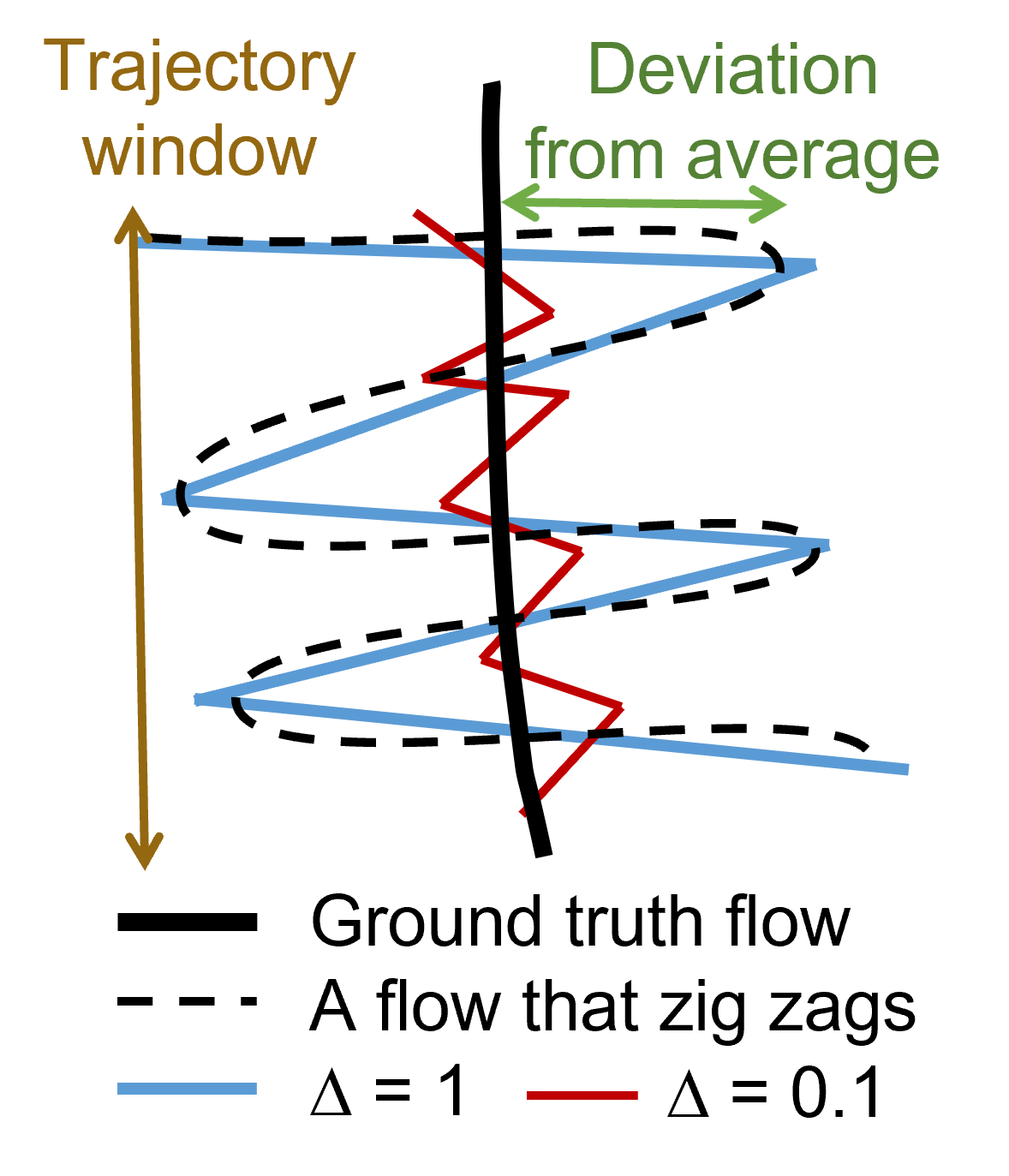}
    \end{minipage}
    &
    \begin{minipage} {.65\textwidth}
    \begin{tabular}{c}
    
    \begin{minipage}{.8\textwidth}
    \begin{tabular}[t]{c|c|c|c}
    \hline
    Test set&$\Delta = 1$&$\Delta = 0.1$ & $\Delta = 0.01$\\
    \hline
    Sensing & 105.90 / 140.29 & 10.49 / 13.86 & 1.05 / 1.39\\
    Madelon &0.11 / 0.23 &0.021 / 0.028&0.0021 / 0.0028\\
    Gisette &1.08 / 1.74&0.21 / 0.28&0.021 / 0.028\\
    \hline
    \end{tabular}
    \begin{center}
        Zigzagging in continuous flow
    \end{center}
    \end{minipage}
    \\
     \\
    \begin{minipage}{.8\textwidth}
    \begin{tabular}[t]{c|c|c|c}
    \hline
    Test set& FW & FW-MID & FW-RK4\\
    \hline
    Sensing & 105.90 / 140.29 & 0.57 / 1.0018 & 0.015 / 0.033\\
    Madelon & 0.031 / 0.040 & 0.025 / 0.029 &0.025 / 0.029\\
    Gisette & 0.30 / 0.40 & 0.25 / 0.11 & 0.22 / 0.20\\
    \hline
    \end{tabular}
    \begin{center}
        Zigzagging in multistep methods
    \end{center}
    \end{minipage}
    \end{tabular}
    
    \end{minipage}
    
    \end{tabular}
    
    \caption{\textbf{Zig-zagging on real datasets.} 
    Average deviation of different discretizations of \FWflow. Top table uses different $\Delta$s and uses the vanilla Euler's discretization (FW). Bottom uses $\Delta = 1$ and different multistep methods.  
    The two numbers in each box correspond to window sizes 5 / 20.
    %In Figure 3, the bold black-colored line in the center represents the real flow, and the rest are the flows by varying the step-size. 
    %With a bigger step-size($\Delta=1$), the extent of zig-zagging from the center is more than that of a small step-size($\Delta=0.1$). This means that the smaller the delta is, the less deviation of the flow direction is. 
    %The right table shows clearly about this phenomenon by changing the step-size delta($\Delta$). %\red{more explanations} 
    }
    \label{fig:continuous_no_zigzag}
\end{figure*}

%\begin{figure}[!htb]
   % \begin{subfigure}[t]{0.3\textwidth}
%  \includegraphics[width=5.5in,trim={3cm 0 2cm 0},clip]{}
  %\caption{Frank-Wolfe}\label{fig:triangle-1}
 %\end{subfigure}
  %  \begin{subfigure}[t]{0.3\textwidth}
  %\includegraphics[width=\linewidth]{}
  %\caption{Continuous time Frank-Wolfe \red{zhaoyue can you redo this but without dots (just line?) dots give illusion of discretization}}\label{fig:triangle-2}
 %\end{subfigure}
  %  \begin{subfigure}[t]{0.3\textwidth}
  %\includegraphics[width=\linewidth]{}
  %\caption{RK44-Frank-Wolfe \red{Zhaoyue can you add midpoint method as well?}}\label{fig:triangle-3}
 %\end{subfigure}
 %\caption{caption goes here}
 %\label{fig:triangle}
%\end{figure}

%In other words, \emph{oscillations are not responsible for the slow convergence of the Frank-Wolfe method}. However, despite this, it is also patently clear from the motivating example (Fig. \ref{fig:triangle}) that the iterates in continuous-time Frank-Wolfe are \emph{capable of finding better search directions}, which can be advantages if we allow for use of line search. 

%\subsection{Comparison against proximal gradient}
%In comparison, \cite{su2016differential} show a very different phenomenon with the gradient descent method and its Nesterov accelerated version. Here, the rate of convergence is equivalent between flow and method, suggesting that discretization error is not responsible for slow convergence. This gives us strong motivation that removing discretization error should lead to a much stronger Frank-Wolfe method.

\section{Runge-Kutta multistep methods}
\label{sec:rkmethod}
\subsection{The generalized Runge-Kutta family}
We now look into multistep methods that better imitate the continuous flow by reducing discretization error. 
Observe that the standard FW algorithm is equivalent to the discretization of \FWflow~by Forward Euler's method with step size $\Delta = 1$. It is well known that the discretization error associated with this scheme is $O(\Delta^p)$ with $p = 1$, e.g. it is a method of order 1.

We now consider Runge-Kutta (RK) methods, a generalized class of higher order methods ($p \geq 1$).
%In particular,  Euler is exactly the explicit RK method with order 1. 
%We utilize a higher order RK integrator to discretize the continuous time FW and derive an improved version from the standard FW. 
These methods are fully parametrized by some choice of $A\in \R^{q\times q}$, $\beta\in \R^q$, and $\omega\in \R^q$ and at step $k$ can be expressed as (for $i = 1,...,q$)
\begin{equation}
%  \Delta {\bx}_{i-1} = \gamma_{k+c_{i-1}}(\bs_{i-1}-(\bx^{(k)}+\bar{\bx}_{i-1})),\quad \bar{\bx}_i = \sum_{j < i } A_{ij} \Delta{\bx}_j, \quad 
%\bs_i = \lmo_\mD(\bx^{(k)}+\bar{\bx}_i),
\begin{array}{lcl}
\xi_i &=& \displaystyle\dot x\big(k+\omega_i ,\;  \bx^{(k)}+  \sum_{j=1}^q A_{ij} \xi_j\big),\\
\bx^{(k+1)} &=& \bx^{(k)}+ \sum_{i=1}^q \beta_i \xi_i.
\end{array}
\label{eq:general_discrete}
  \end{equation}
  For consistency,  $\sum_i \beta_i = 1$, and to maintain explicit implementations,  $A$ is always strictly lower triangular. As a starting point, $\omega_1 = 0$.
  Specifically, we refer to the iteration scheme in \eqref{eq:general_discrete} as a \emph{$q$-stage RK discretization method}.
  
  \begin{proposition}
For a given $q$-stage RK method defined by $A$, $\beta$, and $\omega$, for each given $k\geq 1$, define
\[
 \bar \gamma_i^{(k)} = \frac{c}{c+k+\omega_i}, \qquad 
 \Gamma^{(k)} = \diag({\bar \gamma^{(k)}}_i),
 \]
 \[
 \mathbf P^{(k)} = \Gamma^{(k)} (I+A^T\Gamma^{(k)})^{-1}, \qquad \mathbf z^{(k)} = q \mathbf P^{(k)}\beta.
\]
Then if $0\leq \mathbf z^{(k)}\leq 1$ for all $k\geq 1$, then 
\[
\bx^{(0)}\in \mD\Rightarrow \bx^{(k)}\in \mD, \quad \forall k\geq 1.
\]
  \end{proposition}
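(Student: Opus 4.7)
The plan is to show that each update $\bx^{(k+1)}$ is in fact a convex combination of $\bx^{(k)}$ and the LMO vertices $s_i = \mathrm{argmin}_{s\in\mD}\nabla f(Y_i)^T s$ queried at the intermediate stage points $Y_i = \bx^{(k)} + \sum_j A_{ij}\xi_j$. Since each $s_i\in\mD$, convexity of $\mD$ then closes an induction: assuming $\bx^{(k)}\in\mD$, the combination lies in $\mD$, and the base case is the hypothesis $\bx^{(0)}\in\mD$. So the whole proof reduces to (i) writing $\bx^{(k+1)}$ explicitly as an affine combination of $\bx^{(k)}$ and $s_1,\dots,s_q$, and (ii) checking under the hypothesis $0\leq \mathbf{z}^{(k)}\leq 1$ that this combination is convex.

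For step (i), I would substitute the \FWflow~dynamics into \eqref{eq:general_discrete} to obtain, for each $i$,
\[
\xi_i = \bar\gamma_i^{(k)}\bigl(s_i - \bx^{(k)}\bigr) - \bar\gamma_i^{(k)}\sum_{j=1}^q A_{ij}\xi_j.
\]
Stacking $\xi_1,\dots,\xi_q$ and $s_1,\dots,s_q$ as columns of matrices $\Xi,S\in\R^{n\times q}$, this becomes the matrix equation $\Xi(I+A^T\Gamma^{(k)}) = (S-\bx^{(k)}\mathbf{1}^T)\Gamma^{(k)}$. Invertibility of $I+A^T\Gamma^{(k)}$ is free, since $A$ is strictly lower triangular (so $A^T\Gamma^{(k)}$ is strictly upper triangular and nilpotent). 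Solving, $\Xi = (S-\bx^{(k)}\mathbf{1}^T)\mathbf{P}^{(k)}$ with $\mathbf{P}^{(k)}$ exactly as defined, and therefore
\[
\bx^{(k+1)} = \bx^{(k)} + \Xi\beta = \Bigl(1-\mathbf{1}^T\mathbf{P}^{(k)}\beta\Bigr)\bx^{(k)} + \sum_{i=1}^q (\mathbf{P}^{(k)}\beta)_i\, s_i.
\]

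For step (ii), set $w = \mathbf{P}^{(k)}\beta$ so $\mathbf{z}^{(k)} = qw$. The hypothesis $0\leq \mathbf{z}^{(k)}\leq 1$ forces $0\leq w_i\leq 1/q$ for every $i$, which gives both $w_i\geq 0$ and $\mathbf{1}^T w\leq 1$; hence the coefficients in the expression above form a valid convex combination of $\bx^{(k)}, s_1,\dots,s_q$, all of which lie in $\mD$. Induction on $k$ completes the argument. I do not anticipate a deep obstacle: the main care needed is getting the transpose right in the matrix identity (in particular distinguishing $(I+\Gamma A)^{-1}\Gamma$ from $\Gamma(I+A^T\Gamma)^{-1}$, which match after using $(I+\Gamma M)\Gamma = \Gamma(I+M\Gamma)$ together with a transpose), and noting that invertibility of $I+A^T\Gamma^{(k)}$ is automatic from the explicit-RK structure rather than requiring any extra assumption on $c$.
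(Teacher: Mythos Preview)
Your proposal is correct and follows essentially the same route as the paper: both derive the identity $\Xi=(S-\bx^{(k)}\mathbf 1^T)\mathbf P^{(k)}$ and read off $\bx^{(k+1)}=(1-\mathbf 1^T\mathbf P^{(k)}\beta)\bx^{(k)}+S\mathbf P^{(k)}\beta$, then use $0\le q\mathbf P^{(k)}\beta\le 1$ to conclude the update stays in $\mD$. The only cosmetic difference is that the paper rewrites this as the average $\tfrac{1}{q}\sum_i[(1-z_i)\bx^{(k)}+z_i\bar\bs_i]$ of $q$ two-point convex combinations, whereas you verify directly that the single $(q{+}1)$-point combination is convex via $w_i\ge 0$ and $\mathbf 1^Tw\le 1$; your observation that invertibility of $I+A^T\Gamma^{(k)}$ is automatic from the strictly lower-triangular structure of $A$ is a nice addition the paper leaves implicit.
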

  \begin{proof}
  For a given $k$, construct additionally
  \[
\mathbf Z = \begin{bmatrix} \xi_1 & \xi_2 & \cdots & \xi_q \end{bmatrix},
\]
\[
\bar{\mathbf X} = \begin{bmatrix} \bar \bx_1 & \bar \bx_2 & \cdots & \bar \bx_q \end{bmatrix},\quad
\bar{\mathbf S} = \begin{bmatrix} \bar \bs_1 & \bar \bs_2 & \cdots & \bar \bs_q \end{bmatrix}.
\]
where
      \begin{eqnarray*}
 {\bar \bx}_i &=& \bx^{(k)}+  \sum_{j=1}^q A_{ij} \xi_j, \\
 {\bar \bs}_i &=& \lmo({\bar \bx}_i).
\end{eqnarray*}
  Then we can rewrite \eqref{eq:general_discrete} as
  \begin{eqnarray*}
  \mathbf Z &=& (\bar {\mathbf S} - \bar{\mathbf X})\Gamma = (\bar {\mathbf S} - \bx^{(k)}\mathbf 1^T - \mathbf ZA^T)\Gamma \\
  &=& (\bar {\mathbf S} - \bx^{(k)}\mathbf 1^T )\mathbf P
  \end{eqnarray*}
  for shorthand  $\mathbf P = \mathbf P^{(k)}$ and $\Gamma=\Gamma^{(k)}$. 
  %where we denote $\mathbf P = \Gamma (I+A^T\Gamma)^{-1}$.
%For convenience, we define the vector
%\[
%\mathbf r = \Gamma(I+A^T\Gamma)^{-1}\beta = (r_1,...,r_q).
%\]
%where $\beta = (\beta_1,...,\beta_q)$. Then
 Then
 \begin{eqnarray*}
\bx^{(k+1)} %&=& \bx^{(k)} + \mathbf Z\mathbf \beta\\
&=& \bx^{(k)}(1-\mathbf 1^T \mathbf P\beta) + \bar {\mathbf S} \mathbf P\beta\\
&=&\frac{1}{q}\sum_{i=1}^q  \underbrace{(1-z_i)\bx^{(k)} + z_i\bar \bs_i }_{\hat\xi_i}
  \end{eqnarray*}
  where $z_i$ is the $i$th element of $\bz^{(k)}$, and $\beta = (\beta_1,...,\beta_q)$. 
Then if $0\leq z_i\leq 1$, then $\hat \xi_i$ is a convex combination of $\bx^{(k)}$ and $\bar \bs_i$, and $\hat \xi_i\in \mD$ if $\bx^{(k)}\in \mD$. Moreover, $\bx^{(k+1)}$ is an average of $\hat\xi_i$, and thus $\bx^{(k+1)}\in \mD$. Thus we have recursively shown that $\bx^{(k)}\in \mD$ for all $k$.
  \end{proof}
The condition  $0\leq \bz^{(k)}\leq 1$ can be checked explicitly and is true of almost all RK methods with a notable exception of the midpoint method, where $\bz^{(k)}_i<0$ is possible. The implication is that for most \emph{other} FW-RK methods, the iterates \emph{maintain feasibility}; this is a defining characteristic of the vanilla FW methods.
A full list of the RK methods used in our experiments is described in the Appendix.

\paragraph{Error comparison.}
The \emph{total accumulation error (TAE)}, the distance between flow and discretization trajectory, is described as
\[
\epsilon_k = \|\bx^{(k)} - \bx^{(0)} - \int_0^{k\Delta} \dot \bx(t) dt \|.
\]
The method is of order $p$ if its TAE is $O(\Delta^p)$.
For example, the vanilla Frank-Wolfe (FW) has TAE of order $p =1$, the midpoint method (FW-MD) order $p=2$, and an RK-44 discretization (FW-RK4) with order $p = 4$. 
%To see the benefit of multistep methods, let us fix the number of gradient steps per iteration, and observe the TAE bound. In other words, we compare vanilla Frank-Wolfe (FW) with $\Delta = 1/4$ (one gradient call per step), against the 
%midpoint discretization (FW-MD) using $\Delta = 1/2$, and an  RK44 discretization (FW-RK4) ($\Delta = 1$). Each of these methods requires 4 gradient and LMO calls per $\Delta$. 
%Since Euler's method has TAE order 1, then after $k$ iterations (with 4 steps per iteration), it will accumulate 
%$\epsilon_k = O(k\cdot 4\cdot (\Delta/4)) = O(k\Delta)$. The midpoint method is of order 2, so $\epsilon_k = O(k\cdot 2\cdot (\Delta/2)^2) = O(k\Delta^2/2)$, and RK44 has order 4, and thus $\epsilon_k = O(k\cdot \Delta^4)$. That is, for the \emph{same computational complexity}, the discretization error in RK44 is several orders of magnitude smaller than that of Euler's method.
Figure \ref{fig:triangle_momentum} (top row) compares these three implementations on problem \eqref{eq:triangle_example}, and shows their rate of convergence. The closeness of the new curves with the continuous flow is apparent; however, while multistep methods are converging faster than vanilla FW, the rate does not seem to change.
%, and the rate of convergence is clearly superior with a multistep method (at the cost of extra gradient/LMO calls). 

%\begin{figure*}
%    \centering
%        \includegraphics[width=6in]{figs/triangles.eps}\\
%    \caption{\textbf{Multistep methods on  \eqref{eq:triangle_example}.} The trajectory of \FWflow~(far left), as well as several higher order discretizations $\Delta = 1$ (from second-left to far right, $p = 1$, $p = 2$, $p = 4$).}
%    \label{fig:triangle_multistep}
%\end{figure*}

\paragraph{No zig-zagging!}
One thing that \emph{is} visually apparent in Figure \ref{fig:triangle_momentum}  is that higher order  multistep methods establish better search directions. 
%While Prop \red{xxx} shows that this \emph{in itself} does not improve the convergence rate, we \emph{do} 
Additionally, we numerically quantify less zig-zagging behavior (lower table in Figure \ref{fig:continuous_no_zigzag}). This is still good news, as there are still several key advantages to such an improvement: namely, better uses of momentum and line search.

\section{RK convergence behavior}

\subsection{FW-RK is as good as FW.}
We first establish that using a generalized Runge-Kutta method cannot \emph{hurt} convergence, as compared to the usual Frank-Wolfe method.
\begin{proposition} 
All Runge-Kutta methods converge at worst with rate  $f(\bx^{(k)})-f(\bx^*)\leq O(1/k)$.
\label{prop:rungekutta-positive}
\end{proposition}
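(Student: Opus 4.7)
The plan is to show that each step of a $q$-stage Runge-Kutta discretization of \FWflow~can be written as a convex combination of Frank-Wolfe-like steps at perturbed points, and that the aggregate behaves---up to lower-order perturbation---like a vanilla FW step with an $O(1/k)$ mixing coefficient. Once this is established, the usual inductive proof giving the $O(1/k)$ rate for \FW~carries over, the extra perturbation being absorbed into the smoothness term.

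First I would reuse the identity from the previous proposition, which shows
\[
\bx^{(k+1)} - \bx^{(k)} = \tfrac{1}{q}\sum_{i=1}^q z_i^{(k)}\,(\bar\bs_i - \bx^{(k)}),
\]
where $\bar\bs_i = \lmo(\bar\bx_i)$ and $\bar\bx_i = \bx^{(k)} + \sum_j A_{ij}\xi_j$. Call the effective step size $\bar z^{(k)} := \tfrac{1}{q}\sum_i z_i^{(k)} = \mathbf{1}^T \mathbf P^{(k)}\beta$. A short computation with $\Gamma^{(k)} \to \gamma^{(k)} I$ as $k \to \infty$ shows that $\mathbf P^{(k)} = \gamma^{(k)} I + O(1/k^2)$ and, using $\sum_i \beta_i = 1$, that $\bar z^{(k)} = \gamma^{(k)} + O(1/k^2) = O(1/k)$. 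Moreover each $\xi_j$ has norm at most $\bar\gamma_j^{(k)} D \leq O(D/k)$, so $\|\bar\bx_i - \bx^{(k)}\| = O(D/k)$.

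Next I would apply $L$-smoothness of $f$ to get
\[
\mE_{k+1} \leq \mE_k + \tfrac{1}{q}\sum_{i=1}^q z_i^{(k)}\, \nabla f(\bx^{(k)})^T(\bar\bs_i - \bx^{(k)}) + \tfrac{L}{2}\|\bx^{(k+1)}-\bx^{(k)}\|^2,
\]
with $\mE_k = f(\bx^{(k)}) - f^*$. For each $i$, split
\[
\nabla f(\bx^{(k)})^T(\bar\bs_i - \bx^{(k)}) = \nabla f(\bar\bx_i)^T(\bar\bs_i - \bx^{(k)}) + \bigl(\nabla f(\bx^{(k)})-\nabla f(\bar\bx_i)\bigr)^T(\bar\bs_i - \bx^{(k)}).
\]
The first summand is bounded by $\nabla f(\bar\bx_i)^T(\bx^*-\bx^{(k)})$ by the LMO property at $\bar\bx_i$, and then by $-\mE_k + L\|\bar\bx_i - \bx^{(k)}\| \cdot D$ after another gradient-comparison step. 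The second summand is bounded by $L \|\bar\bx_i - \bx^{(k)}\|\cdot D$ directly. Both perturbation terms are $O(1/k)$ in magnitude, uniformly in $i$, with constants depending on $L$, $D$, $q$, and the RK tableau $(A,\beta,\omega)$.

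Combining these estimates yields a recursion of the form
\[
\mE_{k+1} \leq (1-\bar z^{(k)})\,\mE_k + \bar z^{(k)} \cdot O(1/k) + O(1/k^2),
\]
which, with $\bar z^{(k)} = c/(c+k) + O(1/k^2)$, is structurally identical to the standard FW recursion and gives $\mE_k = O(1/k)$ by the usual induction argument (matching the constant to a $C/(k+c)$ ansatz). The main obstacle is the perturbation analysis in the middle step: one must verify that $\|\bar\bx_i - \bx^{(k)}\|$ really is $O(1/k)$ in every stage, which requires carrying through the fixed-point-style dependence of the $\xi_i$ on one another via the lower-triangular matrix $A$ and checking that no hidden $k$-dependence enters the constants, but since $A$ is strictly lower triangular and the $\bar\gamma_i^{(k)}$ are all $O(1/k)$, this can be done by a simple finite induction over the $q$ stages.
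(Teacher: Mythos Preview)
Your proposal is correct and follows essentially the same route as the paper: apply $L$-smoothness to the RK update, compare $\nabla f$ at $\bx^{(k)}$ and at the intermediate points $\bar\bx_i$, invoke the LMO property at $\bar\bx_i$, bound all perturbation terms by $O(1/k)$ using $\|\bar\bx_i-\bx^{(k)}\|=O(D/k)$, and then run the standard Jaggi-style induction on the resulting recursion. The only notable (and slightly cleaner) deviation is that you bound $\nabla f(\bar\bx_i)^T(\bar\bs_i-\bx^{(k)})$ directly against $-\mE_k$ via a second gradient comparison, whereas the paper passes through $h(\bar\bx_i)$ and then invokes an $L_2$-Lipschitz bound on $f$ to return to $h(\bx^{(k)})$; both yield the same $-\gamma^{(k+1)}\mE_k + O((\gamma^{(k+1)})^2)$ one-step inequality.
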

The proof is in Appendix. %\ref{app:sec:positiveresults}. 
That is to say, an RK method cannot be an order \emph{slower} than vanilla FW.
\footnote{It should be noted, however, that the convergence rate in terms of $k$ does not account for the extra factor of $q$ gradient calls needed for a $q$-stage method. While this may be burdensome, it does not increase the \emph{order} of convergence rate.}

\subsection{FW-RK cannot be better than  FW}
Recall that \FWflow~achieves a rate of $O(1/t^c)$ rate, and is \emph{arbitrarily} fast as $c\to + \infty$. This is also verified numerically in Figure \ref{fig:limit_continuous}; larger $c$ provides a sharper local convergence rate. 
It is hence tempting to think that increasing $c$ can help FW methods in general, and in particular by adapting a higher order multistep method, we can overcome the problems caused by discretization errors. 

To see this in terms of bounds, we can model a discretization method as having two error terms:
\[
\mE_c^{(k)} =  \left(\frac{c}{c+\Delta k}\right)^c, \qquad \mE_d^{(k)} = \mE_d := \frac{1}{\Delta^p}, 
\]
and in general, our guarantees only say that 
%Discretization ultimately has a penalty factor of $k\to k/\Delta$ in the convergence rate. For example, suppose we had a perfect discretization method. Then for $\gamma_k = \frac{c}{c+k}$, using $\mE_0 = f(x_0)-f^*$,
\[
\frac{f(\bx^{(k)})-f(x^*)}{f(\bx^{(0)})-f^*} \leq \max\{\mE_c^{(k)},\mE_d\}.
\]
That is to say, \emph{continuous time analysis does not guarantee any convergence when $\mE_c^{(k)} < \mE_d$}. This is a cautionary tail, in that while continuous-time analysis may be mathematically beautiful and offer many insights, \emph{it may not have a 1-1 correspondence with any implementable rate.}

%If this were the end of the story, we'd still have a huge advantage, since in the local convergence regime this rate is still much faster than $\frac{c}{c+k}$, even if $\Delta$ is very small. However, \emph{all} discretization methods have some discretization error, and the common ones have discretization errors of order $O(\Delta^s)$ for around $s = 4$. Figure \ref{fig:explain_discretization} illustrates the consequence: initially, we see the discretization methods hugging the red curve (perfectly discretized flow); however in this regime, the scale factor $1/\Delta$ keeps the method from having huge benefit.  In the local convergence regime, the discretization error dominates, and while the flow is now superior to vanilla FW, the discretization error keeps the flow rate from being achieved. 
%\begin{figure*}
%    \centering
%    \includegraphics[width=5.5in]{}
%    \caption{\textbf{Understanding discretization of flow.} Black curve plots $\tfrac{c}{c+k}$ the vanilla FW rate. Red curve plots $(\tfrac{c}{c+\Delta k})^c$, the discretized FW-flow, assuming no discretization error. Yellow, purple, and green plots show $(\tfrac{c}{c+\Delta k})^c + \Delta^s$ for $s = 1,2,4$, simulating the effect of discretization error. Here, $\Delta = 0.1$.}
%    \label{fig:explain_discretization}
%\end{figure*}

\paragraph{Lower bound derivation: a toy problem.}
Let us now consider a simple bounded optimization problem over scalar variables $\bx \in \mathbb R$:
\begin{equation}
        \min_{\bx} \; f(\bx)\quad \mathrm{s.~t.} \; -1\leq \bx \leq 1
        \label{eq:toyproblem}
\end{equation}
where 
\[
f(\bx) = 
\begin{cases}
\bx^2/2 & \text{if } |\bx| < \varepsilon\\
\varepsilon \bx-\varepsilon^2/2 & \text{if } \bx \geq \varepsilon\\
-\varepsilon \bx - \varepsilon^2/2 & \text{if } \bx \leq -\varepsilon\\
\end{cases}
\]
which is a scaled version of the Huber norm applied to scalars. By design, no matter how small $\varepsilon$ is, $f$ is $L$-smooth and 1-Lipschitz. Additionally,  $\lmo_{[-1,1]}(\bx) = -\sign(\bx)$. The flow corresponding to \eqref{eq:toyproblem} can be summarized as 
\[
\dot x(t) = -\gamma(t)(\sign(x(t))+x(t))
\]
or, in terms of $u(t) = |x(t)|$,
$\dot u = -\gamma(u+1)$.
Taking our usual $\gamma(t) = c/(c+t)$ and solving the ODE using separation of variables gets
\[
u(t) = (u_0+1)\left(\frac{c}{c+t}\right)^c-1
\]
which gives a $O(1/t^c)$ rate of convergence for $u+1$, and a \emph{finite-time} convergence for $u(t)$. 

This is interesting and surprising, but alas, not all that useful, as numerical experiments do not show that this fast rate can be leveraged. 
%(See figure \ref{fig:toy_flow}.)
%\begin{figure}
%    \centering
%    \includegraphics[width=3.5in]{}
%    \caption{Toy problem flow analysis.}
%    \label{fig:toy_flow}
%\end{figure}
%\begin{figure}
%    \centering
%    \includegraphics[width=5in]{}
%    \caption{Practical implementations cannot leverage benefit.}
%    \label{fig:toy_practical}
%\end{figure}
% Specifically, when applying FW-Flow over our toy problem,
%\[
%\dot x(t,x) = -\frac{c}{c+t}(x+\sign(x))
%\]
%and taking $q_i = \sign(\bx_k+\Delta \cdot \sum_j A_{ij} \xi_j)$, we can write Runge-Kutta methods \eqref{eq:general_discrete} as 
%\begin{eqnarray*}
%\xi_i &=& -\frac{c}{c+k+\omega_i} \left( \bx_k+  \sum_{j=1}^p A_{ij} \xi_j + q_i\right)\\
%\bx_{k+1}&=&  \bx_k + \sum_{i=1}^p \beta_i \xi_i\\
%\end{eqnarray*}

\begin{proposition} 
Assuming that $0 <q\mb P^{(k)} \beta < 1$ for all $k$.
Start with $\bx^{(0)} = 1$.
Suppose 
the choice of $\beta\in \R^p$ is not ``cancellable"; that is, there exist no partition $S_1\cup S_2 =\{1,...,p\}$ where 
\[
\sum_{i\in S_1}\beta_i - \sum_{j\in S_2} \beta_j = 0.
\]
Then regardless of the order $p$ and choice of $A$, $\beta$, and $\omega$, as long as $\sum_i \beta_i = 1$, then 
\[
\sup_{k'>k} |\bx^{(k)}| =\Omega(1/k).
\]
That is, the tightest upper bound is $O(1/k)$.
\label{prop:rungekutta-negative}
\end{proposition}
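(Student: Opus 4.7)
The plan is to show that, when $\bx^{(k)}$ lies close to zero, one step of any RK scheme behaves like the scalar recurrence
\[
\bx^{(k+1)} \;\approx\; \left(1 - \tfrac{c}{k}\right)\bx^{(k)} + \tfrac{c}{k}\,\sigma^{(k)},
\qquad
\sigma^{(k)} := \sum_{i=1}^q \beta_i\,\bar\bs_i,
\]
where each $\bar\bs_i \in \{-1,+1\}$ is the LMO returned at stage $i$. Non-cancellability of $\beta$ bounds $|\sigma^{(k)}|$ below by some $\delta>0$ uniformly in the sign pattern, which forces any iterate of size $o(1/k)$ to be immediately followed by one of size $\Omega(1/k)$. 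From this dichotomy the $\sup_{k'>k}$ lower bound falls out.

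First, I would use the representation from the preceding feasibility proposition to write $\bx^{(k+1)} = \bx^{(k)} + \sum_i \beta_i \xi_i$ with $\xi_i = \bar\gamma_i^{(k)}(\bar\bs_i - \bar\bx_i)$. On $\mD=[-1,1]$ one checks directly that $\lmo(\bx)=-\sign(\bx)$ on all three pieces of $f$, so $\bar\bs_i\in\{-1,+1\}$ whenever $\bar\bx_i\neq 0$. Using $\bar\gamma_i^{(k)} = c/k + O(1/k^2)$ together with the strict lower triangularity of $A$, a forward induction on the stage index gives $\xi_i = O(1/k)$ and $\bar\bx_i = \bx^{(k)} + O(1/k)$. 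Substituting these expansions into the update and using $\sum_i \beta_i = 1$ to collapse the $\bar\bx_i$ contribution yields
\[
\bx^{(k+1)} = \left(1 - \tfrac{c}{k}\right)\bx^{(k)} + \tfrac{c}{k}\,\sigma^{(k)} + O(1/k^2).
\]

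Next, I would exploit non-cancellability. For any sign pattern $\bar\bs_i\in\{\pm1\}$, partition $\{1,\ldots,q\}$ into $S_1=\{i:\bar\bs_i=+1\}$ and $S_2=\{i:\bar\bs_i=-1\}$; the hypothesis forces $|\sigma^{(k)}| = \bigl|\sum_{i\in S_1}\beta_i - \sum_{i\in S_2}\beta_i\bigr| \geq \delta$ for some $\delta>0$ depending only on $\beta$. A triangle inequality then gives the key dichotomy: choosing $0<C<c\delta$, whenever $|\bx^{(k)}|<C/k$ we have, for all large $k$,
\[
|\bx^{(k+1)}| \;\geq\; \tfrac{c\delta}{k} - \left(1-\tfrac{c}{k}\right)\tfrac{C}{k} - O(1/k^2) \;=\; \Omega(1/k).
\]
Thus at least one of $\bx^{(k)}$ and $\bx^{(k+1)}$ has magnitude $\Omega(1/k)$, and $\sup_{k'>k}|\bx^{(k')}| = \Omega(1/k)$ follows after a trivial index shift.

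The main obstacle is careful control of the implicit error terms hidden in $\xi_i = O(1/k)$. Because the $\bar\bx_i$ depend recursively on earlier $\xi_j$'s, one has to verify that a forward substitution (using strict lower triangularity of $A$ and $\bar\gamma_i^{(k)} = O(1/k)$) yields bounds that are \emph{uniform} in the sign pattern of the $\bar\bs_j$; otherwise the $O(1/k^2)$ remainder could grow with $q$ and destroy the dichotomy. A secondary subtlety is the boundary case where some $\bar\bx_i$ vanishes exactly: since the lower bound $|\sigma^{(k)}|\geq\delta$ holds for \emph{every} choice of $\bar\bs_i\in\{\pm1\}$, one can assign the worst-case $\lmo$ selection at these measure-zero events without affecting the conclusion.
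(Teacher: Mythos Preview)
Your argument is correct and shares the paper's core idea: non-cancellability forces $|\sigma^{(k)}| = |\sum_i \beta_i \bar\bs_i| \geq \delta > 0$, the effective step size is $c/k + O(1/k^2)$, and together these produce an $\Omega(1/k)$ oscillation. The execution, however, is more direct than the paper's. The paper proceeds through four lemmas: an abstract recurrence lemma for sequences of the form $|\bx^{(k+1)}| = \bigl|\,|\bx^{(k)}| - c_k/k\,\bigr|$; an \emph{upper} bound $|\bx^{(k)}| \leq C_2/k$ proved by a two-case induction; a bound $|\xi_i| = c/(c+k) \pm O(1/k^2)$ derived via the Woodbury identity applied to $(I+A^T\Gamma)^{-1}$; and finally the non-cancellability step. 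By contrast, you keep the term $(1-c/k)\bx^{(k)}$ explicit in the expansion rather than absorbing it into the remainder, which lets you run a clean dichotomy (either $|\bx^{(k)}| \geq C/k$ or $|\bx^{(k+1)}| \geq C'/k$) without ever needing the $O(1/k)$ \emph{upper} bound on $|\bx^{(k)}|$ or the Woodbury manipulation. Your forward-substitution bound $\xi_j = O(1/k)$ via strict lower triangularity replaces the paper's matrix-inverse estimate and is arguably more transparent. The only place you implicitly use the hypothesis $0 < q\mathbf P^{(k)}\beta < 1$ is to invoke feasibility $|\bx^{(k)}| \leq 1$, which is what makes the $O(1/k^2)$ remainder uniform; this is worth stating explicitly when you write out the details.
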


The proof is in Appendix. %\ref{app:sec:negativeresults}.
The assumption of a ``non-cancellable" choice of $\beta_i$ may seem strange, but in fact it is true for most of the higher order  Runge-Kutta methods. More importantly, the assumption doesn't matter in practice; even if we force $\beta_i$'s to be all equal, our numerical experiments do not show much performance difference in this toy problem.  (Translation: do not design your Runge Kutta method for the $\beta$'s to be cancel-able in hopes of achieving a better rate!)

Proposition \ref{prop:rungekutta-positive} implies a $\Omega(1/k)$ bound on $|\bx_k|$. To extend it to an $\Omega(1/k)$ bound on $f(\bx_k)-f^*$, note that whenever $|\bx| \geq \varepsilon$,
\[
\frac{f(\bx_k)-f^*}{f(\bx_0)-f^*} \geq \frac{|\bx_k|}{2|\bx_0|}
\]
for $\varepsilon$ \emph{arbitrarily small}. \begin{corollary}
The worst best case bound for FW-RK, for any RK method, is of order $O(1/k)$.
\end{corollary}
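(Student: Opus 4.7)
The plan is to sandwich the convergence rate of any RK discretization of \FWflow{} between the upper bound guaranteed by Proposition~\ref{prop:rungekutta-positive} and the lower bound obtained by instantiating Proposition~\ref{prop:rungekutta-negative} on the Huber-type toy problem \eqref{eq:toyproblem}. Since Proposition~\ref{prop:rungekutta-positive} already gives $f(\bx^{(k)}) - f^* \leq O(1/k)$ uniformly over all admissible $A,\beta,\omega$, it suffices to exhibit a problem instance on which no RK method can do asymptotically better.

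First, I would take the toy problem \eqref{eq:toyproblem} with an arbitrary $\varepsilon>0$ and start from $\bx^{(0)}=1$. By Proposition~\ref{prop:rungekutta-negative}, the iterates generated by \emph{any} explicit $q$-stage RK discretization of \FWflow{} satisfy $\sup_{k'>k}|\bx^{(k')}|=\Omega(1/k)$. The only issue is that this proposition controls the iterate magnitude, not the function-value gap, so the next step is to translate the iterate lower bound into an objective lower bound using the shape of $f$.

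Next, I would choose $\varepsilon$ small enough so that, eventually, the iterate $\bx^{(k)}$ satisfying $|\bx^{(k)}|=\Omega(1/k)$ lies in the linear regime $|\bx|\geq \varepsilon$ (this is legitimate because $\varepsilon$ may be taken arbitrarily small without affecting smoothness or the $1$-Lipschitz property of $f$). In this regime $f^*=0$, $f(\bx)=\varepsilon|\bx|-\varepsilon^2/2$, and $f(\bx^{(0)})-f^*=\varepsilon-\varepsilon^2/2\leq \varepsilon|\bx^{(0)}|$, so that
\[
\frac{f(\bx^{(k)})-f^*}{f(\bx^{(0)})-f^*}\;\geq\;\frac{\varepsilon|\bx^{(k)}|-\varepsilon^2/2}{\varepsilon|\bx^{(0)}|}\;\geq\;\frac{|\bx^{(k)}|}{2|\bx^{(0)}|},
\]
as already noted in the paragraph preceding the corollary. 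Combining with the $\Omega(1/k)$ bound on $|\bx^{(k)}|$ from Proposition~\ref{prop:rungekutta-negative} yields
\[
\sup_{k'>k}\;\frac{f(\bx^{(k')})-f^*}{f(\bx^{(0)})-f^*}\;=\;\Omega(1/k).
\]

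Finally, matching this lower bound against the upper bound $O(1/k)$ from Proposition~\ref{prop:rungekutta-positive} produces a tight worst-case rate of $\Theta(1/k)$; in particular, no RK method can do asymptotically better than $O(1/k)$ in the worst case. The main obstacle here is really technical rather than conceptual: it lies in Proposition~\ref{prop:rungekutta-negative} (ensuring that the non-cancellability hypothesis on $\beta$ is harmless for the RK methods one actually uses, and checking $0<q\mathbf{P}^{(k)}\beta<1$), which has already been dispatched in the appendix. Given that proposition, the corollary follows as outlined above essentially by inspection of $f$.
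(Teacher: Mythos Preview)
Your proposal is correct and mirrors the paper's own argument: the corollary in the paper is not given a standalone proof but follows from combining Proposition~\ref{prop:rungekutta-positive} (the $O(1/k)$ upper bound) with Proposition~\ref{prop:rungekutta-negative} and the displayed inequality $\frac{f(\bx_k)-f^*}{f(\bx_0)-f^*}\geq \frac{|\bx_k|}{2|\bx_0|}$ in the paragraph immediately preceding it, exactly as you do. Your sketch simply makes the paper's two-line justification explicit, including the observation that $\varepsilon$ may be taken arbitrarily small so that the relevant iterates fall in the linear regime of the Huber function.
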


\section{A better search direction}
We have now given disappointing proof that multistep methods cannot give acceleration over the general class of problems \eqref{eq:fw-main}.
%The convergence rate of multistep methods often faster than that of vanilla Frank-Wolfe, but when normalized for the extra gradient calls does not always seem to present a perceptible benefit. 
However, as previously discussed, it still seems that the search \emph{direction} is of better quality, when the zig-zagging is removed; this is evidenced by the second table in Fig. \ref{fig:continuous_no_zigzag}. We leverage this in two ways. 
\paragraph{A more aggressive line search.}
Suppose that a multistep method, as presented in the previous section, updates 
\[
\bx^{(k+1)} = \bx^{(k)} + \gamma^{(k)} \bd^{(k)}
\]
First, we consider more aggressive line searches, e.g. replacing $\gamma^{(k)}$ with $\max\{\frac{2}{2+k},\bar \gamma\}$ and
\[
\bar\gamma = \max_{0\leq \gamma\leq 1}\{\gamma : f(\bx^{(k)} + \gamma^{(k)} \bd^{(k)}) \leq f(\bx^{(k)})\}.
\]
Note that this is not the typical line search as in \citep{lacoste2015global}, which forces $\gamma^{(k)}$ to be upper bounded by $O(1/k)$--we are hoping for more aggressive, not less, step sizes. 
%Its use on \eqref{eq:triangle_example} is shown in Figure \ref{fig:triangle_linesearch}.

\paragraph{Better use of momentum.} Second, we consider the benefit of adding a momentum term. Intuitively,  momentum acts as a heavy ball, and does not behave well in the presence of zig-zagging; eliminating zig-zags should thus allow momentum to provide more benefit.
Specifically, we follow the scheme presented in \citet{li2020does} which generalizes the 3-variable Nesterov acceleration  \citep{nesterov2003introductory} from gradient descent to Frank-Wolfe. 
In the adaptation for Frank-Wolfe, each time the LMO is called, we update a momentum term for the gradient, as shown below.
%\hline
%&&\\
%\multicolumn{3}{|c|}{
%\underline{Nesterov momentum}}\\
\begin{eqnarray*}
\by^{(k)} &=& (1-\gamma_k)\bx^{(k)}+\gamma_k \bv^{(k)},\\
\bz^{(k+1)} &=& (1-\gamma_k)\bz^{(k)} + \gamma_k \nabla f(\by^{(k)}),\\
\bv^{(k+1)} &=& \lmo_\mD(\bz^{(k+1)}),\nonumber\\
\bx^{(k+1)} &=& (1-\gamma_k) \bx^{(k)} +  \gamma_k \bv^{(k+1)}\\
\end{eqnarray*}
%\\\hline
\newline\noindent Although more memory is required for the extra variables, no extra gradient or LMO calls are required. 
%An example on problem \eqref{eq:triangle_example} is shown in 

Fig. \ref{fig:triangle_momentum}, rows 2 and 3, illustrate the benefits of multistep methods for line search (row 2) and momentum (row 3), over the toy problem \eqref{eq:triangle_example}.

\begin{figure*}[!htb]
   % \begin{subfigure}[t]{0.3\textwidth}
   \centering
  \includegraphics[height=1in,trim={2cm 0 2cm 0},clip]{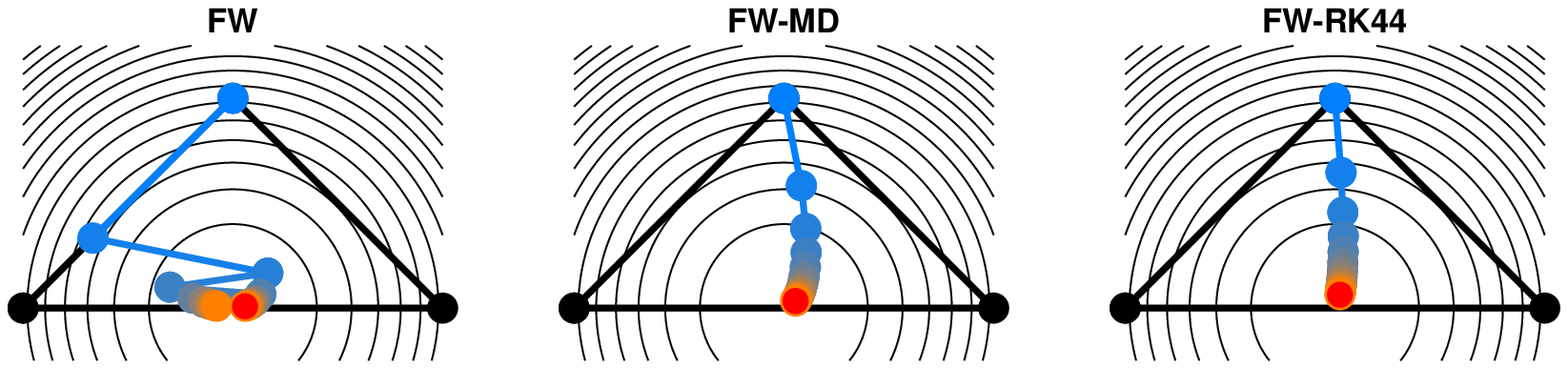}
  \includegraphics[height=1in]{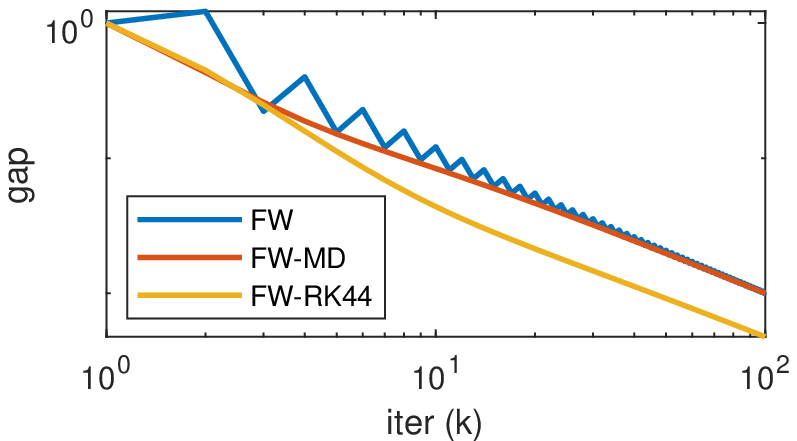}\\
 %\caption{Straight implementation}
% \label{fig:triangle_straight}
%\end{figure}

%\begin{figure}[!htb]
   % \begin{subfigure}[t]{0.3\textwidth}
   \centering
  \includegraphics[height=1in,trim={2cm 0 2cm 0},clip]{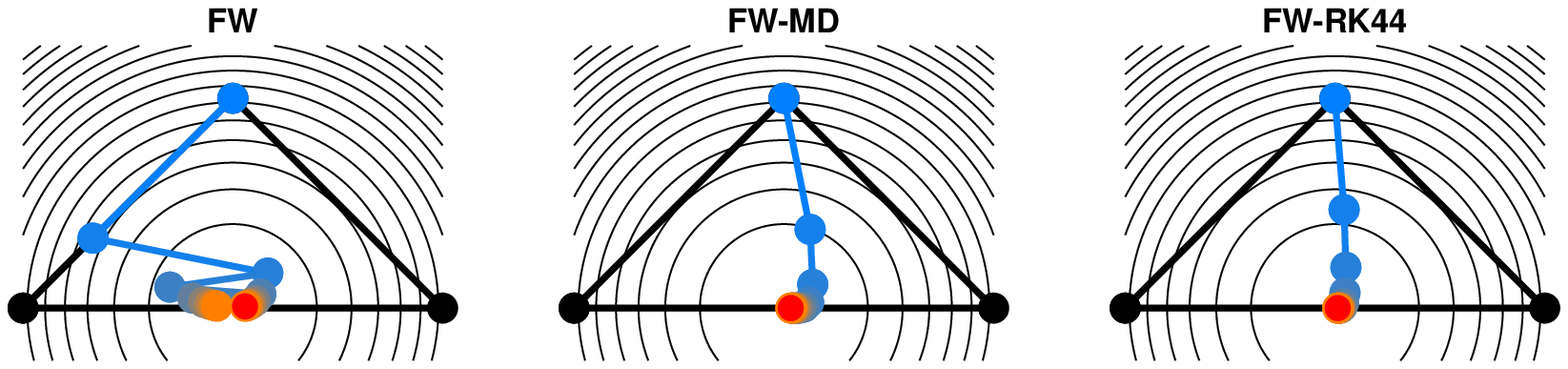}
  \includegraphics[height=1in]{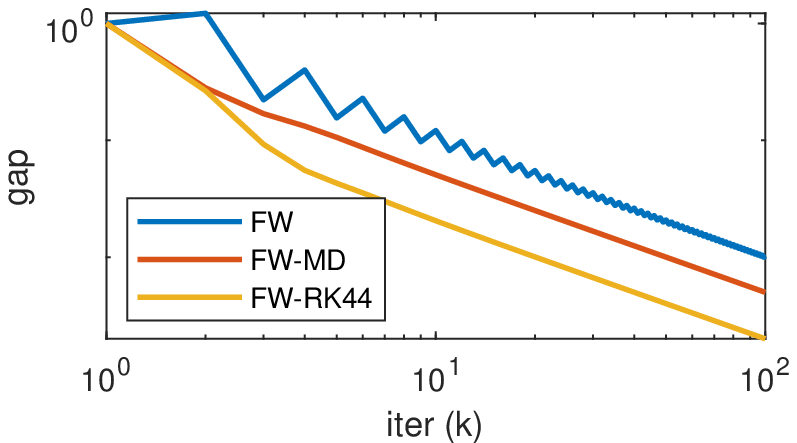}
% \caption{Line search}
 %\label{fig:triangle_linesearch}
%\end{figure}

%\begin{figure}[!htb]
   % \begin{subfigure}[t]{0.3\textwidth}
 %  \centering
  \includegraphics[height=1in,trim={2cm 0 2cm 0},clip]{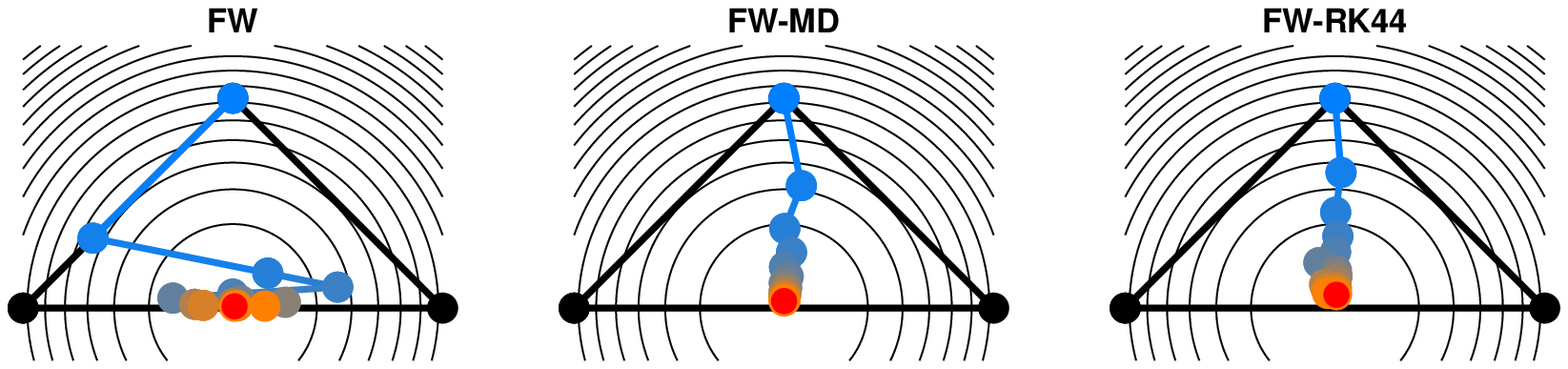}
  \includegraphics[height=1in]{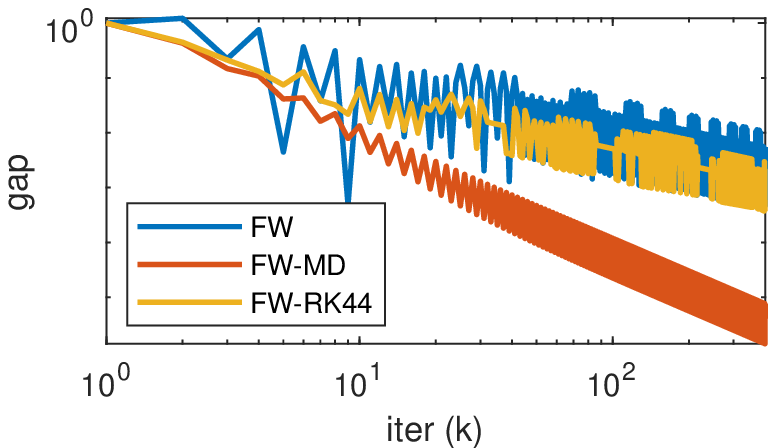}
 \caption{\textbf{Triangle toy problem.} Top: Straight implementions. Middle: Line search. Bottom: Momentum acceleration.}
 \label{fig:triangle_momentum}
\end{figure*}

\section{Numerical experiments}
In this section we evaluate the benefit of our multistep Frank-Wolfe methods on three tasks:
\begin{enumerate}
    \item simulated compressed sensing, with a Gaussian i.i.d. sensing matrix and a noisy observation of a sparse ground truth; 
    \item sparse logistic regression over the Gisette \citep{guyon2004result} dataset; 
    \item and low-rank matrix completion over the MovieLens 1M dataset.
\end{enumerate}
We implement the algorithms in MATLAB and Python and make our code available on Github.
\footnote{\url{https://github.com/Mokhwalee/Continuous-Time-Frank-Wolfe-Does-not-Zig-Zag}}
%\green{Modified the original gitrep version.} 
\paragraph{Simulated compressed sensing}
This task minimizes a quadratic  function with a $\ell_1$- norm constraint. Given $G\in \R^{n\times m}$ with entries i.i.d. Gaussian, we generate $h = Gx_0  + z$ where $x_0$ is a sparse ground truth vector with 10\% nonzeros, and $z_i\sim \mN(0,0.05)$. We solve the following convex problem
\begin{equation}
\min_{x\in \R^n} \quad  \tfrac{1}{2}\|Gx-h\|_2^2\qquad
\mathrm{subject~to} \quad  \|x\|_1\leq\alpha.
\end{equation}
Figure \ref{fig:quadratic} evaluates the performance of the multistep Frank-Wolfe methods, boosted by line search, for a problem with  $m = 500$, $n = 100$, and $\alpha = 1000$.
\begin{figure}[!htb]
  \includegraphics[width=\linewidth]{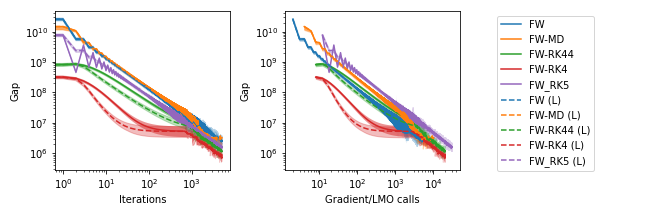}
  \caption{\textbf{Compressed sensing.} $500$ samples, $100$ features, 10\% sparsity ground truth, $\alpha = 1000$. L = line search. Performed over 10 trials.}
  \label{fig:quadratic}
\end{figure}

%  \begin{figure}
%   \begin{center}
%   \includegraphics[width=.8\linewidth]{}
%   \vspace*{-3mm}
%   \caption{Algorithms with well-studied step size in terms of Time and Iterations}
%   \label{fig:4}
%   \end{center}
% \end{figure}

%  \begin{figure}
%   \centering
%   \includegraphics[width=.8\linewidth]{}
%   \vspace*{-3mm}
%   \caption{Algorithms with constant step size in terms of Time and Iterations}
%   \label{fig:5}
% \end{figure}

\paragraph{Sparse logistic regression} We now solve a similar problem with logistic loss
\begin{equation}
    \min_{x\in \R^n} \;  \frac{1}{m}\sum_{i=1}^{m}\log(1+\exp(-y_iz_i^Tx))\quad
    \mathrm{s.t.} \;  \|x\|_1\leq\alpha
\end{equation}
where the problem features $z_i$ and labels $y_i\in \{-1,1\}$ are coming from the Gisette  task
\cite{guyon2004result}. The purpose of this task is to recognize grayscale images of two confusable handwritten digits: 4 and 9. 
%\textcolor{green}{This is a classification problem with sparse continuous input variables}
Again, we see that the multistep method can boost the acceleration given by adding momentum. 
%$y\in \{-1,+1\}^m$, we use samples $m=5000$, the number of features $n=5000$, and $\alpha=10$. If the Frank-Wolfe algorithm sets step size as $\gamma_i=2/(i+2)$, the results are presented in figure\ref{fig:6}. If the step size is a constant like 0.1, the results of algorihthm comparison can be seen in figure\ref{fig:7}.

\begin{figure}[!htb]
  \includegraphics[width=\linewidth]{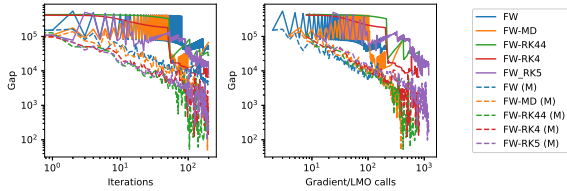}
  \caption{\textbf{Gisette.} The dataset has 2000 samples and 5000 features. $\alpha = 250$. M = momentum. }
  \label{fig:logistic}
\end{figure}

%We consider to minimize a logistic regression function with a \emph{$l_1$}- norm constraint to realize the sparsity constraint, Given $a_1,...,a_m\in R^n$ and and a regularization parameter $\alpha$,which leads to the following problem formulation:

%\begin{figure}[!htb]
%\minipage{0.5\textwidth}
%  \includegraphics[width=\linewidth]{}
%  \caption{well-studied step size}\label{fig:6}
%\endminipage\hfill
%\minipage{0.5\textwidth}%
%  \includegraphics[width=\linewidth]{}
%  \caption{constant step size}\label{fig:7}
%\endminipage
%\end{figure}

\paragraph{Nuclear-Norm Constrained Huber Regression}
Finally, we consider the low-rank matrix factorization problem over the Movielens 100K dataset \citep{harper2015movielens}
\footnote{MovieLens dataset is available at \url{https://grouplens.org/datasets/movielens/100k/}}.
This dataset contains 100,000 ratings (1-5) from 943 users on 1682 movies. Define $O$ as the set of observed indices, e.g. $(i,j)\in O$ if user $i$ rated movie $j$. We normalize the ratings so that $R = R_0-3$, where $R_0$ are the raw values given from the dataset. The goal is then to solve
\begin{equation}
    \displaystyle\min_{\bX\in \R^{n\times m}}   \sum_{i,j \in O } H(R_{i,j}-\bX_{i,j})\quad
    \mathrm{s.t.} \quad  \|\bX\|_*\leq\alpha,
    \label{eq:matfact}
    \end{equation}
    where 
    \[
    H(\xi) = \begin{cases}
    \frac{1}{2} \xi^2, & |\xi| \leq \rho\\
    \rho |\xi-\rho| + \frac{1}{2}\rho^2, & |\xi| > \rho.
    \end{cases}
    \]
    is the Huber norm.
Figure \ref{fig:matfact} gives a comparison of the different methods for solving \eqref{eq:matfact}, with and without momentum.
\begin{figure}[!htb]
  \includegraphics[width=\linewidth]{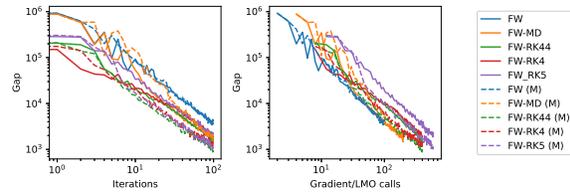}
  \caption{\textbf{Movielens.} The dataset contains 100,000 ratings (1-5) from 943 users on 1682 movies. We normalize the dataset to range (-2,2), and solve \eqref{eq:matfact} with $\alpha = 1000$, $\rho = 10$. M = momentum.}
  \label{fig:matfact}
\end{figure}

%  \begin{figure}
%   \centering
%   \includegraphics[width=.8\linewidth]{well_lr_mBn_10.pdf}
%   \vspace*{-3mm}
%   \caption{Algorithms with well studied step size in terms of Time and Iterations}
%   \label{fig:6}
% \end{figure}

%  \begin{figure}
%   \centering
%   \includegraphics[width=.8\linewidth]{con_lr_mBn_10.pdf}
%   \vspace*{-3mm}
%   \caption{Algorithms with constant studied step size in terms of Time and Iterations}
%   \label{fig:7}
% \end{figure}

\section{Discussion}
The goal of this work is to put forth a thorough analysis of multistep methods over the flow system captured by the Frank-Wolfe method. Our results are numerically promising but theoretically disheartening; on the one hand, because the flow system does not zig-zag, better discretization methods do offer more stable search directions and momentum performance. On the other hand, because truncation error ultimately dominates convergence error, better multistep methods in general cannot improve local convergence rates. This   discovery  has implications not just for Frank-Wolfe methods, but for \emph{any} multistep optimization methods to beat the original convergence rates, and is a cautionary tale for assuming too much from a beautiful flow rate.

%From the analysis and numerical results, we believe we have put forth a convincing argument that reducing zig-zagging by reducing discretization error indeed improves the performance of Frank-Wolfe methods, simply by suppressing zig-zagging behavior. However, our negative result shows that there is no hope of achieving a better \emph{convergence rate}. Additionally, we
%offer intuition to say that better discretization methods in general may not be a key to better optimization methods, as discretization error always dominates local convergence error in the long run. 

%Therefore it is imperative to somehow exploit the quality of ``better step direction", offered by eliminating zig-zagging. Here, we offer two solutions: better line search, and better acceleration from momentum-based methods; however, we believe this area is still open for more improvements.

\bibliography{main.bbl}
\bibliographystyle{icml2022}

\newpage
\appendix
\onecolumn

\section{Runge Kutta methods}

% \begin{tabular}{ccc}
% \begin{minipage}{.3\textwidth}
% \begin{eqnarray*}
% \bs_{1} &=& \lmo_\mD(\bx^{(k)})\\
% \bs_{2} &=& \lmo_\mD(\bx^{(k)}+\Delta\bx_{1})\\
% \bx^{(k+1)} &=& \bx^{(k)} + \frac{1}{2}(\Delta\bx_{1}+\Delta\bx_{2})
% \end{eqnarray*}
% \end{minipage}
% &$\qquad$&
% \begin{minipage}{.5\textwidth}
% \begin{eqnarray*}
% \Delta\bx_{1} &=&  \gamma_k(\bs_{1}-\bx^{(k)})\\
% \Delta\bx_{2} &=&  \gamma_{k+1} (\bs_{2}-(\bx^{(k)}+\Delta\bx_{1}))
% \end{eqnarray*}
% \end{minipage}
% \end{tabular}

\begin{itemize}
    \item Midpoint method
    \[
    A = \begin{bmatrix}
    0 & 0 \\ 1/2 & 0
    \end{bmatrix},
    \qquad
    \beta = \begin{bmatrix}
    0 \\1 
    \end{bmatrix},
    \qquad
    \omega = \begin{bmatrix}
    0\\1/2
    \end{bmatrix},\qquad
    \bz^{(1)} \approx \begin{bmatrix}
    -0.3810\\1.1429
    \end{bmatrix},\qquad
    \bz^{(2)} \approx \begin{bmatrix}
    -0.2222\\0.8889
    \end{bmatrix}
\]
     \item Runge Kutta 4th Order Tableau (44)
    \[
    A = \begin{bmatrix}
   0    & 0    & 0 & 0 \\
     1/2  & 0    & 0 & 0 \\
     0  & 1/2  & 0 & 0 \\
    0    & 0    & 1 & 0 
    \end{bmatrix},
    \qquad
    \beta = \begin{bmatrix}
  1/6   \\ 1/3  \\ 1/3 \\ 1/6
    \end{bmatrix},
    \qquad
    \omega = \begin{bmatrix}
    0   \\
        1/2 \\
        1/2 \\
        1  
    \end{bmatrix},\qquad
    \bz^{(1)} \approx \begin{bmatrix}
    0.2449\\
    0.5986\\
    0.5714\\
    0.3333
    \end{bmatrix}
    \]
    
        \item Runge Kutta 3/8 Rule Tableau (4)
    \[
    A = \begin{bmatrix}
   0    & 0    & 0 & 0 \\
     1/3  & 0    & 0 & 0 \\
     -1/3  & 1  & 0 & 0 \\
     1    & -1    & 1 & 0 \\
    \end{bmatrix},
    \qquad
    \beta = \begin{bmatrix}
 1/8   \\3/8  \\ 3/8 \\ 1/8
    \end{bmatrix},
    \qquad
    \omega = \begin{bmatrix}
   0    \\
        1/3  \\
        2/3 \\
        1  
    \end{bmatrix},\qquad
    \bz^{(1)} \approx \begin{bmatrix}
    0.1758\\
    0.6409\\
    0.6818\\
    0.2500
    \end{bmatrix}
    \]
\item 
Runge Kutta 5 Tableau
\[
    A = \begin{bmatrix}
 0 & 0 & 0 & 0 & 0 & 0\\
 1/4 & 0 & 0 & 0 & 0 & 0\\
  1/8 & 1/8 & 0 & 0 & 0 & 0\\
 0 & -1/2 & 1 & 0 & 0 & 0\\
 3/16 & 0 & 0 & 9/16 & 0 & 0\\
 -3/7 & 2/7 & 12/7 & -12/7 & 8/7 & 0\\
    \end{bmatrix},
    \qquad
    \beta = \begin{bmatrix}
7/90 \\ 0 \\ 32/90 \\ 12/90 \\ 32/90 \\7/90
    \end{bmatrix},
    \qquad
    \omega = \begin{bmatrix}
    0 \\
        1/4 \\
        1/4 \\
        1/2 \\
        3/4\\
        1 
    \end{bmatrix},\qquad
    \bz^{(1)} \approx \begin{bmatrix}
      0.1821\\
    0.0068\\
    0.8416\\
    0.3657\\
    0.9956\\
    0.2333
    \end{bmatrix}
    \]
\end{itemize}
In all examples, $\|\bz^{(k)}\|_\infty$ monotonically decays with $k$.

\iffalse
\section{Lower bound proof}
   
\begin{proposition}
Suppose that $\gamma_k = \frac{c}{c+k}$, for some constant $c\geq 1$. Then, for any choice of $c$, the method \FW~has a lower bound of $f(\bx^{(k)})-f^* = \Omega(1/k)$.
\end{proposition}
\begin{proof}
Consider the problem
\[
\minimize{-1\leq x\leq 1} \; \tfrac{1}{2}x^2
\]
where $\bx^{(0)} = -\epsilon < 0$. Using the mixing rate $\gamma_k = \frac{c}{c+k}$, then $\bx^{(1)} = 1$, and afterwards,
\[
\bx^{(k+1)} = \frac{k}{c+k} \bx^{(k)} - \frac{c}{c+k}\sign(\bx^{(k)})
\]
which implies
\begin{equation}
|\bx^{(k+1)}| = \left|\frac{k}{c+k} |\bx^{(k)}| - \frac{c}{c+k}\right|.
\label{eq:lowerbound-proofhelper1}
\end{equation}
Suppose $|x^{(k)}| < O((\frac{c}{c+k})^r)$ for some value of $r$. We will show that necessarily, $r\leq 1$. If this assumption holds, then in the limit of $k\to +\infty$,  it must be that (for $r>0$)
\[
\frac{|x^{(k+1)}|}{|x^{(k)}|} \leq \frac{(c+k)^r}{(c+k+1)^r} \overset{k\to+\infty}{\to} 1.
\]
Suppose that for some $k$, $|\bx^{(k)}| < (\frac{c}{c+k})^r$. Then by \eqref{eq:lowerbound-proofhelper1},
\[
\frac{|x^{(k+1)}|}{|x^{(k)}|}  \geq |\frac{k}{c+k} - \frac{c}{c+k}\frac{1}{|x^{(k)}|}| \geq |\frac{k}{c+k}-(\frac{c+k}{c})^{r-1}| 
\]
which diverges to $+\infty$ as $k\to +\infty$, unless $r =1$.
\end{proof}
\fi

\section{Positive Runge-Kutta convergence result}
\label{app:sec:positiveresults}

\begin{lemma} 
After one step, the generalized Runge-Kutta method satisfies
\[
h(\bx^{(k+1)})-h(\bx^{(k)}) \leq
-\gamma^{(k+1)} h(\bx^{(k)}) + D_4(\gamma^{(k+1)})^2\]

where $h(\bx) = f(\bx) - f(\bx^*)$ and

%\[
%D_4 =3L^2D_3+L_2D_3,
%\qquad D_3 = (2+c_1)c_1 D, \qquad c_1 = \gamma q \max_{ij} |A_{ij}|, \qquad D = \diam(\mD).
%\]

\[
D_4 =\frac{LD_2^2+2LD_2D_3+2D_3}{2},\quad D_2 = c_1 D, \quad D_3 = c_2c_1 D, \quad c_1 = qp_{\max}, \quad c_2 = q \max_{ij} |A_{ij}|, \quad D = \diam(\mD).
\]

\label{lem:rungekutta-positive-onestep}
\end{lemma}
\begin{proof}
For ease of notation, we write $\bx = \bx^{(k)}$ and $\bx^+ = \bx^{(k+1)}$. We will use $\gamma=\gamma^{(k)} = \tfrac{c}{c+k}$, and $\bar \gamma_i = \tfrac{c}{c+k+\omega_i}$.
Now consider the generalized RK method
\begin{eqnarray*}
\bar \bx_i &=& \bx + \sum_{j=1}^q A_{ij} \xi_j\\
\xi_i &=& \underbrace{\frac{c}{c+k+\omega_i}}_{\tilde \gamma_i} ( \bs_i - \bar {\bx}_i )\\
\bx^+&=&  \bx + \sum_{i=1}^q \beta_i \xi_i\\
\end{eqnarray*}
where $\bs_i = \lmo(\bar \bx_i)$.

%Since $\bx\in \mD$ and $\bs_i\in \mD$ for all $i$, then recursively, (starting with $\bar \bx_1 = \bx\in \mD$)
%\begin{eqnarray*}
% \bar \bx_{i+1} &=& \bx + \sum_{j=1}^{i}A_{i+1,j}\bar \gamma_j (\bs_j-\bar \bx_j)\\
%  &=& \bar \bx_i - \sum_{j=1}^{i-1}A_{ij}\bar \gamma_j (\bs_j-\bar \bx_j) + \sum_{j=1}^{i}A_{i+1,j}\bar \gamma_j (\bs_j-\bar \bx_j)\\
%  &=&  \sum_{j=1}^{i-1}(A_{ij}-A_{i+1,j})\bar \gamma_j (\bs_j-\bar \bx_j) + A_{i+1,i}\bar \gamma_i \bs_i + (1-A_{i+1,i}\bar \gamma_i ) \bar \bx_i.
%\end{eqnarray*}
%In other words, if $\mD$ is both convex and symmetric, then $\bar \bx_i\in \mD$ for all $i$. More generally, by

Define $D = \diam(\mD)$. 
We use the notation from  section
\ref{sec:rkmethod}. 
Denote the 2,$\infty$-norm as
\[
\|A\|_{2,\infty} = \max_j \|a_j\|_2
\]
where $a_j$ is the $j$th column of $A$. Note that all the element-wise elements in 
\[
\mathbf P^{(k)} = \Gamma^{(k)}(I+A^T\Gamma^{(k)})^{-1}
\]
is a decaying function of $k$, and thus defining $p_{\max} = \|\mathbf P^{(1)}\|_{2,\infty}$
we see that
\[
\|\bar {\mathbf Z}\|_{2,\infty} = \|(\bar {\mathbf S} - \bx^{(k)}\mathbf 1)\mathbf P^{(k)}\|_{2,\infty} \leq qp_{\max} D.
\]

Therefore, since $\bar {\mathbf Z} = (\bar {\mathbf S}-\bar {\mathbf X})\Gamma$, and all the diagonal elements of $\Gamma$ are at most 1, 
\[
\|\bs_i-\bar \bx_i\|_2  \leq qp_{\max} D =: D_2
\]
and
\[
\|\bx-\bar \bx_i\|_2 = \|\sum_{j=1}^q A_{ij} \gamma_j (\bs_j-\bar \bx_j)\|_2 \leq q \max_{ij} |A_{ij}| \gamma D_2 =: D_3 \gamma.
\]

Then 
\begin{eqnarray*}
f(\bx^+)-f(\bx) &\leq&  \nabla f(\bx)^T(\bx^+-\bx) + \frac{L}{2}\|\bx^+-\bx\|_2^2\\
&=&  \sum_i \beta_i \tilde \gamma_i \nabla f(\bx)^T(\bs_i-\bar \bx_i) + \frac{L}{2}\underbrace{\|\sum_i \beta_i \tilde \gamma_i (\bs_i-\bar \bx_i)\|_2^2}_{\leq \gamma^2 D_2^2}\\
%&=& \green{ \sum_i \beta_i \tilde \gamma_i \nabla f(\bx)^T(\bs_i-\bar \bx_i) + \frac{L}{2}\underbrace{\|\sum_i \beta_i \tilde \gamma_i (\bs_i-\bar \bx_i)\|_2^2}_{\leq \gamma^2 D_2^2}}\\
&=&  \sum_i \beta_i \tilde \gamma_i (\nabla f(\bx)-\nabla f(\bar \bx_i))^T(\bs_i-\bar \bx_i) +
 \sum_i \beta_i \tilde \gamma_i \underbrace{\nabla f(\bar \bx_i)^T(\bs_i-\bar \bx_i)}_{-\gap(\bar \bx_i)} +
 \frac{L\gamma^2D_2^2}{2}\\
% &=&  
% \green{\sum_i \beta_i \tilde \gamma_i (\nabla f(\bx)-\nabla f(\bar \bx_i))^T(\bs_i-\bar \bx_i) +
% \sum_i \beta_i \tilde \gamma_i \underbrace{\nabla f(\bar \bx_i)^T(\bs_i-\bar \bx_i)}_{-\gap(\bar \bx_i)} +
 %\frac{L\gamma^2D_2^2}{2}}\\
 &\leq& \sum_i \beta_i\underbrace{ \tilde \gamma_i}_{\leq \gamma} \underbrace{\|\nabla f(\bx)-\nabla f(\bar \bx_i)\|_2}_{L\|\bx-\bar \bx_i\|_2=L\gamma D_3}\underbrace{\|\bs_i-\bar \bx_i\|_2}_{\leq D_2} - \sum_i \beta_i\tilde\gamma_i \gap(\bar \bx_i) + \frac{L\gamma^2 D_2^2}{2}\\
 %&\leq& 
 %\green{\sum_i \beta_i\underbrace{ \tilde \gamma_i}_{\leq \gamma} \underbrace{\|\nabla f(\bx)-\nabla f(\bar \bx_i)\|_2}_{L\|\bx-\bar \bx_i\|_2=L\gamma D_3}\underbrace{\|\bs_i-\bar \bx_i\|_2}_{\leq D_2} - \sum_i \beta_i\tilde\gamma_i \gap(\bar \bx_i) + \frac{L\gamma^2 D_2^2}{2}}\\
 %&\leq &  -\sum_i \beta_i\tilde\gamma_i \gap(\bar \bx_i) + \frac{3L\gamma^2 D_3^2}{2}\\
 &\leq &  -\sum_i \beta_i\tilde\gamma_i \gap(\bar \bx_i) + \frac{L\gamma^2 D_2^2}{2} + \frac{2L\gamma^2 D_2D_3}{2}\\ 
 %&\leq& -\gamma^+ \sum_i\beta_i h(\bar \bx_i) + \frac{3L\gamma^2 D_3^2}{2}\\
 &\leq& -\gamma^+ \sum_i\beta_i h(\bar \bx_i)  + \frac{L\gamma^2D_2(D_2+2D_3)}{2} 
\end{eqnarray*}
where $\gamma=\gamma_k$, and $\gamma^+=\gamma_{k+1}$. Now assume $f$ is also $L_2$-continuous, e.g. $|f(\bx_1)-f(\bx_2)|\leq L_2\|\bx_1-\bx_2\|_2$. Then, taking  $h(\bx) = f(\bx) -f(\bx^*)$,

\begin{eqnarray*}
h(\bx^+)-h(\bx) 
 %&\leq& -\gamma^+ \sum_i\beta_i (h(\bar \bx_i)-h(\bx)) -\gamma^+ \underbrace{\sum_i\beta_i}_{=1} h(\bx) + \frac{3L\gamma^2 D_3^2}{2}\\
 &\leq& -\gamma^+ \sum_i\beta_i (h(\bar \bx_i)-h(\bx)) -\gamma^+ \underbrace{\sum_i\beta_i}_{=1} h(\bx) + \frac{L\gamma^2D_2(D_2+2D_3)}{2}\\
% &\leq & \gamma \sum_i\beta_iL_2 \underbrace{\|\bar \bx_i-\bx\|_2}_{\leq \gamma D_3}-\gamma^+ h(\bx) + \frac{3L\gamma^2D_3^2}{2}\\
 &\leq & 
 \gamma \sum_i\beta_iL_2 \underbrace{\|\bar \bx_i-\bx\|_2}_{\leq \gamma D_3}-\gamma^+ h(\bx) + \frac{L\gamma^2D_2(D_2+2D_3)}{2}\\
 % &\leq & -\gamma^+ h(\bx) + \frac{(3LD_3^2+L_2D_3)\gamma^2}{2}\\
 &\leq & -\gamma^+ h(\bx) + \frac{\gamma^2(LD_2^2+2LD_2D_3+2D_3)}{2}\\
 &\leq& -\gamma^+ h(\bx) + D_4(\gamma^+)^2
\end{eqnarray*}
where 
%$D_4 =3LD_3^2+L_2D_3$
$D_4 =\frac{LD_2^2+2LD_2D_3+2D_3}{2}$and we use $2 \geq (\gamma/\gamma^+)^2$ for all $k \geq 1$.

\end{proof}

Proof of Prop. \ref{prop:rungekutta-positive}
\begin{proof}
After establishing Lemma \ref{lem:rungekutta-positive-onestep}, the rest of the proof is a recursive argument, almost identical to that in  \cite{jaggi2013revisiting}. 

At $k = 0$, we define $h_0 =\max\{ h(\bx^{(0)}), \frac{ D_4c^2}{c-1}\}$, 
%\green{$h_0 =\max\{ h(\bx^{(0)}), \frac{ D_4c^2}{c-1}\}$
and it is clear that $h(\bx^{(0)}) \leq  h_0$.

Now suppose that for some $k$, $h(\bx^{(k)}) \leq \frac{h_0}{k+1}$. Then
\begin{eqnarray*}
h(x_{k+1}) &\leq &  h(\bx_k) - \gamma_{k+1}h(\bx^{(k)}) + {D_4} \gamma_{k+1}^2\\
&\leq & \frac{h_0}{k+1}\cdot \frac{k+1}{c+k+1} + D_4 \frac{c^2}{(c+k+1)^2}\\
&=& \frac{h_0}{c+k+1} + D_4 \frac{c^2}{(c+k+1)^2}\\
&=& \left( h_0 + \frac{D_4c^2}{c+k+1}\right) \left(\frac{k+2}{c+k+1}\right) \frac{1}{k+2}
\\
&\leq& h_0\left( 1+\frac{c-1}{c+k+1}\right) \left(\frac{k+2}{c+k+1}\right) \frac{1}{k+2}\\
\\
&\leq& h_0\underbrace{\left( \frac{2c+ k }{c+k+1}\right) \left(\frac{k+2}{c+k+1}\right)}_{\leq 1} \frac{1}{k+2}.
\end{eqnarray*}
\end{proof}

\section{Negative Runge-Kutta convergence result}
\label{app:sec:negativeresults}

This section gives the proof for Proposition \ref{prop:rungekutta-negative}.

\begin{lemma}[$O(1/k)$ rate]\label{lem:o1krate}
Start with $\bx^{(0)} = 1$. Then consider the sequence defined by
\[
\bx^{(k+1)}= |\bx^{(k)} - \frac{c_k}{k}|
\]
where, no matter how large $k$ is, there exist some constant where  $C_1 < \max_{k'>k} c_{k'} $.
(That is, although $c_k$ can be anything, the smallest upper bound of $c_k$ does not decay.) Then
\[
\sup_{k'\geq k} |\bx^{(k')}| = \Omega(1/k).
\]
That is, the smallest upper bound of $|\bx^{(k)}|$ at least of order $1/k$.
\end{lemma}
\begin{proof}
We will show that the smallest upper bound of $|\bx^{(k)}|$ is larger than $C_1/(2k)$.

Proof by contradiction. 
Suppose that at some point $K$, for all $k \geq K$,  $|\bx^{(k)}| < C_1/(2k)$. Then from that point forward, 
\[
\sign(\bx^{(k)}-\frac{c_k}{k}) = -\sign(\bx^{(k)})
\]
and there exists some $k' > k$ where $c_{k'} > C_1$. Therefore, at that point,
\[
|\bx^{(k'+1)}| = \frac{c_{k'}}{k'}-|\bx^{(k')}| 
\geq \frac{C_1}{2k'}>\frac{C_1}{2(k'+1)}.
\]
This immediately establishes a contradiction.
\end{proof}

Now define the operator 
\[
T(\bx^{(k)}) = \bx^{(k+1)}-\bx^{(k)}
\]
and note that 
\[
|\bx^{(k+1)}| = |\bx^{(k)}+T(\bx^{(k)})| = | |\bx_k|+\sign(\bx^{(k)})T(\bx^{(k)})|.
\]
Thus, if we can show that there exist some $\epsilon$, agnostic to $k$ (but possibly related to Runge Kutta design parameters), and
\begin{equation}
\exists k'\geq k, \quad -\sign(\bx^{(k')})T(\bx^{(k')}) > \frac{\epsilon}{k'},\quad \forall k,
\label{eq:lemma-helper-1}
\end{equation}
 then based on the previous lemma, this shows $\sup_{k'>k}|\bx_{k'}| = \Omega (1/k)$ as the smallest possible upper bound.

\begin{lemma}
Assuming that $0 <q\mb P^{(k)} \beta < 1$ then there exists a finite point $\tilde k$ where for all $k > \tilde k$, 
\[
|\bx^{(k)}| \leq \frac{C_2}{k}
\]
for some $C_2 \geq 0$.
\end{lemma}
\begin{proof}

We again use the block matrix notation
\[
\bZ^{(k)} = (\bar \bS-\bx^{(k)}\mb 1^T) \Gamma^{(k)}(I+A^T\Gamma^{(k)})^{-1}
\]
where $\Gamma^{(k)} = \diag(\tilde \gamma_i^{(k)})$ and each element $\tilde \gamma_i^{(k)} \leq \gamma^{(k)}$.

First, note that by construction, since 
\[
\|\bar \bS-\bx^{(k)}\mb 1^T\|_{2,\infty} \leq D_4, \quad \|(I+A^T\Gamma^{(k)})^{-1}\|_2 \leq  \|(I+A^T\Gamma^{(0)})^{-1}\|_2
\]
are bounded above by constants, then 
\[
\|\bZ^{(k)}\|_\infty \leq \frac{c}{c+k} C_1
\]
for $C_1 = D_4\|(I+A^T\Gamma^{(0)})^{-1}\|_2 $.

First find constants $C_3 $, $C_4$, and $\bar k$ such that
\begin{equation}
\frac{C_3}{k} \leq \mb 1^T \mathbf P^{(k)} \beta \leq \frac{C_4}{k}, \quad \forall k>\bar k,
\label{eq:boundx_helper}
\end{equation}
and such constants always exist, since
by assumption, there exists some $a_{\min} > 0$, $a_{\max}<1$ and some $k'$ where
\[
a_{\min} <q\mb P^{(k')} \beta < a_{\max} \Rightarrow \frac{a_{\min}}{q \gamma_{\max}} \leq (I+A^T\Gamma^{(k')})^{-1} \beta \leq \frac{a_{\max}}{q \gamma_{\min}}
\]
where 
\[
\gamma_{\min} = \min_i \frac{c}{c+k'+\omega^{(k')}_i}, \qquad \gamma_{\max} = \frac{c}{c+k'}.
\]
Additionally, for all $k > c+1$,
\[
\frac{c}{2k}\leq \frac{c}{c+k+1} \leq \Gamma^{(k)}_{ii} \leq \frac{c}{c+k} \leq \frac{c}{k}.
\]
Therefore taking
\[
C_3 = \frac{ca_{\min}}{2q \gamma_{\max} }, \qquad C_4 = \frac{c a_{\max}}{q\gamma_{\min}}, \qquad \bar k = \max\{k',c+1\}
\]
satisfies \eqref{eq:boundx_helper}.

Now define
\[
C_2 = \max\{|\bx^{(1)}|,4cq C_1 \|A\|_\infty, 4C_3, 4C_4\}.
\]
We will now inductively show that $|\bx^{(k)}|\leq \frac{C_2}{k}$. From the definition of $C_2$, we have the base case for $k = 1$:
\[
|\bx^{(1)}| \leq \frac{|\bx^{(1)}|}{1} \leq \frac{C_2}{k}.
\]
Now assume that $|\bx^{(k)}|\leq \frac{C_2}{k}$. Recall that
\[
\bx^{(k+1)} = \bx^{(k)}(1-\mb 1^T \mathbf P^{(k)} \beta) + \bar \bS\mathbf P^{(k)} \beta, \qquad \bar \bS = [\bar \bs_1,...,\bar\bs_q], \qquad \bs_i = -\sign(\bar \bx_i)
\]
and we denote the composite mixing term $\bar \gamma^{(k)} = \mb 1^T\mb P^{(k)} \beta$. 
We now look at two cases separately.
\begin{itemize}
    \item Suppose first that $\bar \bS = -\sign(\bx^{(k)}\mb 1^T)$, e.g. $\sign(\bar\bx_i) = \sign(\bx^{(k)})$ for all $i$. Then 
    \[
    \bar \bS \mb P^{(k)} \beta = -\sign(\bx^{(k)})\bar \gamma_k,
    \]
    and 
    \begin{eqnarray*}
    |\bx^{(k+1)}| &=&  |\bx^{(k)}(1-\bar\gamma^{(k)}) + \bar \bS\mathbf P^{(k)} \beta|\\
    &=&  |\bx^{(k)}(1-\bar\gamma^{(k)}) -\sign(\bx^{(k)})\bar\gamma^{(k)}|\\
    &=&  |\underbrace{\sign(\bx^{(k)})\bx^{(k)}}_{|\bx^{(k)}|}(1-\bar\gamma^{(k)}) -\underbrace{\sign(\bx^{(k)})\sign(\bx^{(k)})}_{=1}\bar\gamma^{(k)}|\\
    &=&  | |\bx^{(k)}|(1-\bar\gamma^{(k)}) -\bar\gamma^{(k)}|\\
    &\leq&  \max\{ |\bx^{(k)}|(1-\bar\gamma^{(k)}) -\bar\gamma^{(k)},
    \bar\gamma^{(k)} - |\bx^{(k)}|(1-\bar\gamma^{(k)}) 
    \}\\
    &\leq&  \max\Bigg\{ \underbrace{\frac{C_2}{k}(1-\frac{C_3}{k}) -\frac{C_3}{k}}_{(*)},
    \frac{C_4}{k} \Bigg\}\\
    \end{eqnarray*}
    and when $k \geq \frac{C_2}{C_3} \iff C_3 \geq \frac{C_2}{k}$,
    \[
    (*) \leq C_2\left(\frac{1}{k} -\frac{1}{k^2}\right) \leq \frac{C_2}{k+1}.
    \]
    Taking also $C_4 \leq \frac{C_2}{4}$,
    \begin{eqnarray*}
    |\bx^{(k+1)}| 
    \leq  \max\left\{ \frac{C_2}{k+1},
    \frac{C_2}{4k} \right\} \leq \frac{C_2}{k+1}\\
    \end{eqnarray*}
    for all $k \geq 1$.
    
    \item Now suppose that there is some $i$ where $\bar\bs_i = \sign(\bx^{(k)}\mb1^T)$. 
    Now since 
\[
\bar \bS = -\sign(\bx^{(k)}\mb 1^T + \bZ A^T)
\]
then this implies that 
$ |\bx^{(k)}| < (\bZ A^T)_i$.
But since 
\[
|(\bZ A^T)_i| \leq \|\bZ\|_\infty \|A\|_\infty q \leq \frac{c}{c+k}(C_1\|A\|_\infty q) \leq \frac{C_2}{4k}, 
\]
this implies that
\[
|\bx^{(k+1)}| \leq \frac{C_2}{4k}(1-\frac{C_3}{k}) + \frac{C_2}{4k} \leq \frac{C_2}{2k}\leq \frac{C_2}{k+1}, \quad \forall k > 1.
\]
\end{itemize}
Thus we have shown the induction step, which completes the proof.
\end{proof}

\begin{lemma}

There exists a finite point $\tilde k$ where for all $k > \tilde k$, 
\[
 \frac{c}{c+k}-\frac{C_4}{k^2} < |\xi_i| < \frac{c}{c+k}+\frac{C_4}{k^2}
\]
for some constant $C_4>0$.
\end{lemma}
\begin{proof}
Our goal is to show that 
\[
\gamma^{(k)} - \frac{C_4}{k^2} \leq \|\bZ\|_\infty \leq  \gamma^{(k)} + \frac{C_4}{k^2}
\]
for some $C_4\geq 0$, and for all $k \geq k'$ for some $k' \geq 0$.
Using the Woodbury matrix identity,
\[
\Gamma(I+A^T\Gamma)^{-1} = \Gamma\left(I - A^T(I+\Gamma A^T)^{-1} \Gamma\right)
\]
and thus
\[
\bZ^{(k)} = \bar \bS\Gamma -\underbrace{\left(\bx^{(k)}\mb 1^T \Gamma + (\bar \bS-\bx^{(k)}\mb 1^T)\Gamma A^T(I+\Gamma A^T)^{-1}\Gamma\right)}_{\mathbf B}.
\]
and thus
\[
   |\bar \bs_i \tilde \gamma_i| - \frac{C_3}{k^2} \leq |\xi_i^{(k)}| \leq |\bar \bs_i \tilde \gamma_i| + \frac{C_3}{k^2}
\]
where via triangle inequalities and norm decompositions,
\[
\frac{C_3}{k^2} = \max_i |\mathbf B_i| \leq \underbrace{|\bx^{(k)}|}_{O(1/k)} \gamma_k + D_4 \gamma_k^2 \|A\|_\infty (I+\Gamma^{(0)}A^T)^{-1} = O(1/k^2).
\]
Finally, since $\bar \bs_i\in \{-1,1\}$, then 
$|\bar \bs_i\tilde \gamma_i| = \tilde\gamma_i$, and in particular,
\[
\frac{c}{c+k+\omega_i}\leq \frac{c}{c+k}
\]
and
\[
\frac{c}{c+k+\omega_i}\geq \frac{c}{c+k+\omega_{\max}} = \frac{c}{c+k} - \frac{c}{c+k}\frac{\omega_{\max}}{c+k+\omega_{\max}} \geq \frac{c}{c+k}-\frac{c\omega_{\max}}{k^2}
\]
Therefore, taking $C_4 = c\omega_{\max}+C_3$ completes the proof.

\iffalse 
Then
\[
1-\frac{C_2}{k}\leq 1-\bx^{(k)}-\sum_j A_{ij}\xi_j\leq |\bx^{(k)}+\sum_j A_{ij}\xi_j-\bs_i| \leq 1+\bx^{(k)}+\sum_j A_{ij}\xi_j\leq 1+\frac{C_2}{k}.
\]
Therefore
\[
\frac{c}{c+k+\omega_i}(1-\frac{C_2}{k}) \leq |\xi_i| \leq  \frac{c}{c+k+\omega_i}(1+\frac{C_2}{k}).
\]
The upper bound simplifies as
\[
\frac{c}{c+k+\omega_i}(1+\frac{C_3}{k})\leq \frac{c}{c+k}(1+\frac{C_3}{k})\leq \frac{c}{c+k}+\frac{cC_3}{k^2}.
\]
We can further clean up the lower bound as
\[
 \frac{c}{c+k+\omega_i}= \frac{c}{c+k}\green{-}\frac{c\omega_i}{(c+k)(c+k+\omega_i)} \geq \frac{c}{c+k} - \frac{c\omega_i}{k^2}
 \]
so
\[
\frac{c}{c+k}-\frac{C_4}{k^2}\leq |\xi_i| \leq   \frac{c}{c+k}+\frac{C_4}{k^2}
\]
where $C_4 = \max\{c\omega_i,cC_3\}$.
Now we reexamine our need for  $|\bx^{(k)}| \geq \varepsilon$. Note that in this toy problem, whether $|\bx^{(k)}|\leq \varepsilon$ or not \emph{does not affect the method iterates at all}, and thus though $f(\bx^{(k)})$ is different when $|\bx^{(k)}|\leq \varepsilon$ or not, $|\bx^{(k)}|$ will not change its behavior even after $|\bx^{(k)}|\leq \varepsilon$.
This concludes the proof.
\fi
\end{proof}

\begin{lemma}
There exists some large enough $\tilde k$ where for all $k \geq \tilde k$, it must be that
\begin{equation}
\exists k'\geq k, \quad -\sign(\bx^{(k')})T(\bx^{(k')}) > \frac{\epsilon}{k'}.
\label{eq:lemma-helper-1}
\end{equation}
\end{lemma}

\begin{proof}
Define a partitioning $S_1\cup S_2 = \{1,...,q\}$, where
\[
S_1 = \{i : \xi_i > 0\}, \quad S_2 = \{j : \xi_j\leq 0\}.
\]
Defining $\bar \xi = \frac{c}{c+k}$,
\[
|\sum_{i=1}^q \beta_i \xi_i| = |\sum_{i\in S_1}\beta_i |\xi_i| -\sum_{j\in S_2}\beta_j|\xi_j|| \geq  \left(\bar \xi-\frac{C_4}{k^2}\right)\cdot\left|\sum_{i\in S_1}\beta_i-\sum_{j\in S_2}\beta_j\right|.
\]

By assumption, there does not exist a combination of $\beta_i$ where a specific linear combination could cancel them out; that is, suppose that there exists some constant $\bar \beta$, where for \emph{every} partition of sets $S_1$,$S_2$,
\[
0<\bar\beta :=\min_{S_1,S_2}  |\sum_{i\in S_1}\beta_i-\sum_{j\in S_2}\beta_j|.
\]
Then  
\[
|\sum_{i=1}^q \beta_i \xi_i|  \geq  \left(\frac{c}{c+k}-\frac{C_2}{k^2}\right)\bar\beta \geq \bar\beta\frac{\max\{C_2,c\}}{k}.
\]
Picking $\epsilon = \max\{C_2,c\}$ concludes the proof.
\end{proof}

\end{document}

% --- supplement: sections/supplementary.tex ---

\bibliographystyle{icml2022}

\newpage
\appendix
\onecolumn

\section{Runge Kutta methods}

% \begin{tabular}{ccc}
% \begin{minipage}{.3\textwidth}
% \begin{eqnarray*}
% \bs_{1} &=& \lmo_\mD(\bx^{(k)})\\
% \bs_{2} &=& \lmo_\mD(\bx^{(k)}+\Delta\bx_{1})\\
% \bx^{(k+1)} &=& \bx^{(k)} + \frac{1}{2}(\Delta\bx_{1}+\Delta\bx_{2})
% \end{eqnarray*}
% \end{minipage}
% &$\qquad$&
% \begin{minipage}{.5\textwidth}
% \begin{eqnarray*}
% \Delta\bx_{1} &=&  \gamma_k(\bs_{1}-\bx^{(k)})\\
% \Delta\bx_{2} &=&  \gamma_{k+1} (\bs_{2}-(\bx^{(k)}+\Delta\bx_{1}))
% \end{eqnarray*}
% \end{minipage}
% \end{tabular}

\begin{itemize}
    \item Midpoint method
    \[
    A = \begin{bmatrix}
    0 & 0 \\ 1/2 & 0
    \end{bmatrix},
    \qquad
    \beta = \begin{bmatrix}
    0 \\1 
    \end{bmatrix},
    \qquad
    \omega = \begin{bmatrix}
    0\\1/2
    \end{bmatrix},\qquad
    \bz^{(1)} \approx \begin{bmatrix}
    -0.3810\\1.1429
    \end{bmatrix},\qquad
    \bz^{(2)} \approx \begin{bmatrix}
    -0.2222\\0.8889
    \end{bmatrix}
\]
     \item Runge Kutta 4th Order Tableau (44)
    \[
    A = \begin{bmatrix}
   0    & 0    & 0 & 0 \\
     1/2  & 0    & 0 & 0 \\
     0  & 1/2  & 0 & 0 \\
    0    & 0    & 1 & 0 
    \end{bmatrix},
    \qquad
    \beta = \begin{bmatrix}
  1/6   \\ 1/3  \\ 1/3 \\ 1/6
    \end{bmatrix},
    \qquad
    \omega = \begin{bmatrix}
    0   \\
        1/2 \\
        1/2 \\
        1  
    \end{bmatrix},\qquad
    \bz^{(1)} \approx \begin{bmatrix}
    0.2449\\
    0.5986\\
    0.5714\\
    0.3333
    \end{bmatrix}
    \]
    
        \item Runge Kutta 3/8 Rule Tableau (4)
    \[
    A = \begin{bmatrix}
   0    & 0    & 0 & 0 \\
     1/3  & 0    & 0 & 0 \\
     -1/3  & 1  & 0 & 0 \\
     1    & -1    & 1 & 0 \\
    \end{bmatrix},
    \qquad
    \beta = \begin{bmatrix}
 1/8   \\3/8  \\ 3/8 \\ 1/8
    \end{bmatrix},
    \qquad
    \omega = \begin{bmatrix}
   0    \\
        1/3  \\
        2/3 \\
        1  
    \end{bmatrix},\qquad
    \bz^{(1)} \approx \begin{bmatrix}
    0.1758\\
    0.6409\\
    0.6818\\
    0.2500
    \end{bmatrix}
    \]
\item 
Runge Kutta 5 Tableau
\[
    A = \begin{bmatrix}
 0 & 0 & 0 & 0 & 0 & 0\\
 1/4 & 0 & 0 & 0 & 0 & 0\\
  1/8 & 1/8 & 0 & 0 & 0 & 0\\
 0 & -1/2 & 1 & 0 & 0 & 0\\
 3/16 & 0 & 0 & 9/16 & 0 & 0\\
 -3/7 & 2/7 & 12/7 & -12/7 & 8/7 & 0\\
    \end{bmatrix},
    \qquad
    \beta = \begin{bmatrix}
7/90 \\ 0 \\ 32/90 \\ 12/90 \\ 32/90 \\7/90
    \end{bmatrix},
    \qquad
    \omega = \begin{bmatrix}
    0 \\
        1/4 \\
        1/4 \\
        1/2 \\
        3/4\\
        1 
    \end{bmatrix},\qquad
    \bz^{(1)} \approx \begin{bmatrix}
      0.1821\\
    0.0068\\
    0.8416\\
    0.3657\\
    0.9956\\
    0.2333
    \end{bmatrix}
    \]
\end{itemize}
In all examples, $\|\bz^{(k)}\|_\infty$ monotonically decays with $k$.

\iffalse
\section{Lower bound proof}
   
\begin{proposition}
Suppose that $\gamma_k = \frac{c}{c+k}$, for some constant $c\geq 1$. Then, for any choice of $c$, the method \FW~has a lower bound of $f(\bx^{(k)})-f^* = \Omega(1/k)$.
\end{proposition}
\begin{proof}
Consider the problem
\[
\minimize{-1\leq x\leq 1} \; \tfrac{1}{2}x^2
\]
where $\bx^{(0)} = -\epsilon < 0$. Using the mixing rate $\gamma_k = \frac{c}{c+k}$, then $\bx^{(1)} = 1$, and afterwards,
\[
\bx^{(k+1)} = \frac{k}{c+k} \bx^{(k)} - \frac{c}{c+k}\sign(\bx^{(k)})
\]
which implies
\begin{equation}
|\bx^{(k+1)}| = \left|\frac{k}{c+k} |\bx^{(k)}| - \frac{c}{c+k}\right|.
\label{eq:lowerbound-proofhelper1}
\end{equation}
Suppose $|x^{(k)}| < O((\frac{c}{c+k})^r)$ for some value of $r$. We will show that necessarily, $r\leq 1$. If this assumption holds, then in the limit of $k\to +\infty$,  it must be that (for $r>0$)
\[
\frac{|x^{(k+1)}|}{|x^{(k)}|} \leq \frac{(c+k)^r}{(c+k+1)^r} \overset{k\to+\infty}{\to} 1.
\]
Suppose that for some $k$, $|\bx^{(k)}| < (\frac{c}{c+k})^r$. Then by \eqref{eq:lowerbound-proofhelper1},
\[
\frac{|x^{(k+1)}|}{|x^{(k)}|}  \geq |\frac{k}{c+k} - \frac{c}{c+k}\frac{1}{|x^{(k)}|}| \geq |\frac{k}{c+k}-(\frac{c+k}{c})^{r-1}| 
\]
which diverges to $+\infty$ as $k\to +\infty$, unless $r =1$.
\end{proof}
\fi

\section{Positive Runge-Kutta convergence result}
\label{app:sec:positiveresults}

\begin{lemma} 
After one step, the generalized Runge-Kutta method satisfies
\[
h(\bx^{(k+1)})-h(\bx^{(k)}) \leq
-\gamma^{(k+1)} h(\bx^{(k)}) + D_4(\gamma^{(k+1)})^2\]

where $h(\bx) = f(\bx) - f(\bx^*)$ and

%\[
%D_4 =3L^2D_3+L_2D_3,
%\qquad D_3 = (2+c_1)c_1 D, \qquad c_1 = \gamma q \max_{ij} |A_{ij}|, \qquad D = \diam(\mD).
%\]

\[
D_4 =\frac{LD_2^2+2LD_2D_3+2D_3}{2},\quad D_2 = c_1 D, \quad D_3 = c_2c_1 D, \quad c_1 = qp_{\max}, \quad c_2 = q \max_{ij} |A_{ij}|, \quad D = \diam(\mD).
\]

\label{lem:rungekutta-positive-onestep}
\end{lemma}
\begin{proof}
For ease of notation, we write $\bx = \bx^{(k)}$ and $\bx^+ = \bx^{(k+1)}$. We will use $\gamma=\gamma^{(k)} = \tfrac{c}{c+k}$, and $\bar \gamma_i = \tfrac{c}{c+k+\omega_i}$.
Now consider the generalized RK method
\begin{eqnarray*}
\bar \bx_i &=& \bx + \sum_{j=1}^q A_{ij} \xi_j\\
\xi_i &=& \underbrace{\frac{c}{c+k+\omega_i}}_{\tilde \gamma_i} ( \bs_i - \bar {\bx}_i )\\
\bx^+&=&  \bx + \sum_{i=1}^q \beta_i \xi_i\\
\end{eqnarray*}
where $\bs_i = \lmo(\bar \bx_i)$.

%Since $\bx\in \mD$ and $\bs_i\in \mD$ for all $i$, then recursively, (starting with $\bar \bx_1 = \bx\in \mD$)
%\begin{eqnarray*}
% \bar \bx_{i+1} &=& \bx + \sum_{j=1}^{i}A_{i+1,j}\bar \gamma_j (\bs_j-\bar \bx_j)\\
%  &=& \bar \bx_i - \sum_{j=1}^{i-1}A_{ij}\bar \gamma_j (\bs_j-\bar \bx_j) + \sum_{j=1}^{i}A_{i+1,j}\bar \gamma_j (\bs_j-\bar \bx_j)\\
%  &=&  \sum_{j=1}^{i-1}(A_{ij}-A_{i+1,j})\bar \gamma_j (\bs_j-\bar \bx_j) + A_{i+1,i}\bar \gamma_i \bs_i + (1-A_{i+1,i}\bar \gamma_i ) \bar \bx_i.
%\end{eqnarray*}
%In other words, if $\mD$ is both convex and symmetric, then $\bar \bx_i\in \mD$ for all $i$. More generally, by

Define $D = \diam(\mD)$. 
We use the notation from  section
\ref{sec:rkmethod}. 
Denote the 2,$\infty$-norm as
\[
\|A\|_{2,\infty} = \max_j \|a_j\|_2
\]
where $a_j$ is the $j$th column of $A$. Note that all the element-wise elements in 
\[
\mathbf P^{(k)} = \Gamma^{(k)}(I+A^T\Gamma^{(k)})^{-1}
\]
is a decaying function of $k$, and thus defining $p_{\max} = \|\mathbf P^{(1)}\|_{2,\infty}$
we see that
\[
\|\bar {\mathbf Z}\|_{2,\infty} = \|(\bar {\mathbf S} - \bx^{(k)}\mathbf 1)\mathbf P^{(k)}\|_{2,\infty} \leq qp_{\max} D.
\]

Therefore, since $\bar {\mathbf Z} = (\bar {\mathbf S}-\bar {\mathbf X})\Gamma$, and all the diagonal elements of $\Gamma$ are at most 1, 
\[
\|\bs_i-\bar \bx_i\|_2  \leq qp_{\max} D =: D_2
\]
and
\[
\|\bx-\bar \bx_i\|_2 = \|\sum_{j=1}^q A_{ij} \gamma_j (\bs_j-\bar \bx_j)\|_2 \leq q \max_{ij} |A_{ij}| \gamma D_2 =: D_3 \gamma.
\]

Then 
\begin{eqnarray*}
f(\bx^+)-f(\bx) &\leq&  \nabla f(\bx)^T(\bx^+-\bx) + \frac{L}{2}\|\bx^+-\bx\|_2^2\\
&=&  \sum_i \beta_i \tilde \gamma_i \nabla f(\bx)^T(\bs_i-\bar \bx_i) + \frac{L}{2}\underbrace{\|\sum_i \beta_i \tilde \gamma_i (\bs_i-\bar \bx_i)\|_2^2}_{\leq \gamma^2 D_2^2}\\
%&=& \green{ \sum_i \beta_i \tilde \gamma_i \nabla f(\bx)^T(\bs_i-\bar \bx_i) + \frac{L}{2}\underbrace{\|\sum_i \beta_i \tilde \gamma_i (\bs_i-\bar \bx_i)\|_2^2}_{\leq \gamma^2 D_2^2}}\\
&=&  \sum_i \beta_i \tilde \gamma_i (\nabla f(\bx)-\nabla f(\bar \bx_i))^T(\bs_i-\bar \bx_i) +
 \sum_i \beta_i \tilde \gamma_i \underbrace{\nabla f(\bar \bx_i)^T(\bs_i-\bar \bx_i)}_{-\gap(\bar \bx_i)} +
 \frac{L\gamma^2D_2^2}{2}\\
% &=&  
% \green{\sum_i \beta_i \tilde \gamma_i (\nabla f(\bx)-\nabla f(\bar \bx_i))^T(\bs_i-\bar \bx_i) +
% \sum_i \beta_i \tilde \gamma_i \underbrace{\nabla f(\bar \bx_i)^T(\bs_i-\bar \bx_i)}_{-\gap(\bar \bx_i)} +
 %\frac{L\gamma^2D_2^2}{2}}\\
 &\leq& \sum_i \beta_i\underbrace{ \tilde \gamma_i}_{\leq \gamma} \underbrace{\|\nabla f(\bx)-\nabla f(\bar \bx_i)\|_2}_{L\|\bx-\bar \bx_i\|_2=L\gamma D_3}\underbrace{\|\bs_i-\bar \bx_i\|_2}_{\leq D_2} - \sum_i \beta_i\tilde\gamma_i \gap(\bar \bx_i) + \frac{L\gamma^2 D_2^2}{2}\\
 %&\leq& 
 %\green{\sum_i \beta_i\underbrace{ \tilde \gamma_i}_{\leq \gamma} \underbrace{\|\nabla f(\bx)-\nabla f(\bar \bx_i)\|_2}_{L\|\bx-\bar \bx_i\|_2=L\gamma D_3}\underbrace{\|\bs_i-\bar \bx_i\|_2}_{\leq D_2} - \sum_i \beta_i\tilde\gamma_i \gap(\bar \bx_i) + \frac{L\gamma^2 D_2^2}{2}}\\
 %&\leq &  -\sum_i \beta_i\tilde\gamma_i \gap(\bar \bx_i) + \frac{3L\gamma^2 D_3^2}{2}\\
 &\leq &  -\sum_i \beta_i\tilde\gamma_i \gap(\bar \bx_i) + \frac{L\gamma^2 D_2^2}{2} + \frac{2L\gamma^2 D_2D_3}{2}\\ 
 %&\leq& -\gamma^+ \sum_i\beta_i h(\bar \bx_i) + \frac{3L\gamma^2 D_3^2}{2}\\
 &\leq& -\gamma^+ \sum_i\beta_i h(\bar \bx_i)  + \frac{L\gamma^2D_2(D_2+2D_3)}{2} 
\end{eqnarray*}
where $\gamma=\gamma_k$, and $\gamma^+=\gamma_{k+1}$. Now assume $f$ is also $L_2$-continuous, e.g. $|f(\bx_1)-f(\bx_2)|\leq L_2\|\bx_1-\bx_2\|_2$. Then, taking  $h(\bx) = f(\bx) -f(\bx^*)$,

\begin{eqnarray*}
h(\bx^+)-h(\bx) 
 %&\leq& -\gamma^+ \sum_i\beta_i (h(\bar \bx_i)-h(\bx)) -\gamma^+ \underbrace{\sum_i\beta_i}_{=1} h(\bx) + \frac{3L\gamma^2 D_3^2}{2}\\
 &\leq& -\gamma^+ \sum_i\beta_i (h(\bar \bx_i)-h(\bx)) -\gamma^+ \underbrace{\sum_i\beta_i}_{=1} h(\bx) + \frac{L\gamma^2D_2(D_2+2D_3)}{2}\\
% &\leq & \gamma \sum_i\beta_iL_2 \underbrace{\|\bar \bx_i-\bx\|_2}_{\leq \gamma D_3}-\gamma^+ h(\bx) + \frac{3L\gamma^2D_3^2}{2}\\
 &\leq & 
 \gamma \sum_i\beta_iL_2 \underbrace{\|\bar \bx_i-\bx\|_2}_{\leq \gamma D_3}-\gamma^+ h(\bx) + \frac{L\gamma^2D_2(D_2+2D_3)}{2}\\
 % &\leq & -\gamma^+ h(\bx) + \frac{(3LD_3^2+L_2D_3)\gamma^2}{2}\\
 &\leq & -\gamma^+ h(\bx) + \frac{\gamma^2(LD_2^2+2LD_2D_3+2D_3)}{2}\\
 &\leq& -\gamma^+ h(\bx) + D_4(\gamma^+)^2
\end{eqnarray*}
where 
%$D_4 =3LD_3^2+L_2D_3$
$D_4 =\frac{LD_2^2+2LD_2D_3+2D_3}{2}$and we use $2 \geq (\gamma/\gamma^+)^2$ for all $k \geq 1$.

\end{proof}

Proof of Prop. \ref{prop:rungekutta-positive}
\begin{proof}
After establishing Lemma \ref{lem:rungekutta-positive-onestep}, the rest of the proof is a recursive argument, almost identical to that in  \cite{jaggi2013revisiting}. 

At $k = 0$, we define $h_0 =\max\{ h(\bx^{(0)}), \frac{ D_4c^2}{c-1}\}$, 
%\green{$h_0 =\max\{ h(\bx^{(0)}), \frac{ D_4c^2}{c-1}\}$
and it is clear that $h(\bx^{(0)}) \leq  h_0$.

Now suppose that for some $k$, $h(\bx^{(k)}) \leq \frac{h_0}{k+1}$. Then
\begin{eqnarray*}
h(x_{k+1}) &\leq &  h(\bx_k) - \gamma_{k+1}h(\bx^{(k)}) + {D_4} \gamma_{k+1}^2\\
&\leq & \frac{h_0}{k+1}\cdot \frac{k+1}{c+k+1} + D_4 \frac{c^2}{(c+k+1)^2}\\
&=& \frac{h_0}{c+k+1} + D_4 \frac{c^2}{(c+k+1)^2}\\
&=& \left( h_0 + \frac{D_4c^2}{c+k+1}\right) \left(\frac{k+2}{c+k+1}\right) \frac{1}{k+2}
\\
&\leq& h_0\left( 1+\frac{c-1}{c+k+1}\right) \left(\frac{k+2}{c+k+1}\right) \frac{1}{k+2}\\
\\
&\leq& h_0\underbrace{\left( \frac{2c+ k }{c+k+1}\right) \left(\frac{k+2}{c+k+1}\right)}_{\leq 1} \frac{1}{k+2}.
\end{eqnarray*}
\end{proof}

\section{Negative Runge-Kutta convergence result}
\label{app:sec:negativeresults}

This section gives the proof for Proposition \ref{prop:rungekutta-negative}.

\begin{lemma}[$O(1/k)$ rate]\label{lem:o1krate}
Start with $\bx^{(0)} = 1$. Then consider the sequence defined by
\[
\bx^{(k+1)}= |\bx^{(k)} - \frac{c_k}{k}|
\]
where, no matter how large $k$ is, there exist some constant where  $C_1 < \max_{k'>k} c_{k'} $.
(That is, although $c_k$ can be anything, the smallest upper bound of $c_k$ does not decay.) Then
\[
\sup_{k'\geq k} |\bx^{(k')}| = \Omega(1/k).
\]
That is, the smallest upper bound of $|\bx^{(k)}|$ at least of order $1/k$.
\end{lemma}
\begin{proof}
We will show that the smallest upper bound of $|\bx^{(k)}|$ is larger than $C_1/(2k)$.

Proof by contradiction. 
Suppose that at some point $K$, for all $k \geq K$,  $|\bx^{(k)}| < C_1/(2k)$. Then from that point forward, 
\[
\sign(\bx^{(k)}-\frac{c_k}{k}) = -\sign(\bx^{(k)})
\]
and there exists some $k' > k$ where $c_{k'} > C_1$. Therefore, at that point,
\[
|\bx^{(k'+1)}| = \frac{c_{k'}}{k'}-|\bx^{(k')}| 
\geq \frac{C_1}{2k'}>\frac{C_1}{2(k'+1)}.
\]
This immediately establishes a contradiction.
\end{proof}

Now define the operator 
\[
T(\bx^{(k)}) = \bx^{(k+1)}-\bx^{(k)}
\]
and note that 
\[
|\bx^{(k+1)}| = |\bx^{(k)}+T(\bx^{(k)})| = | |\bx_k|+\sign(\bx^{(k)})T(\bx^{(k)})|.
\]
Thus, if we can show that there exist some $\epsilon$, agnostic to $k$ (but possibly related to Runge Kutta design parameters), and
\begin{equation}
\exists k'\geq k, \quad -\sign(\bx^{(k')})T(\bx^{(k')}) > \frac{\epsilon}{k'},\quad \forall k,
\label{eq:lemma-helper-1}
\end{equation}
 then based on the previous lemma, this shows $\sup_{k'>k}|\bx_{k'}| = \Omega (1/k)$ as the smallest possible upper bound.

\begin{lemma}
Assuming that $0 <q\mb P^{(k)} \beta < 1$ then there exists a finite point $\tilde k$ where for all $k > \tilde k$, 
\[
|\bx^{(k)}| \leq \frac{C_2}{k}
\]
for some $C_2 \geq 0$.
\end{lemma}
\begin{proof}

We again use the block matrix notation
\[
\bZ^{(k)} = (\bar \bS-\bx^{(k)}\mb 1^T) \Gamma^{(k)}(I+A^T\Gamma^{(k)})^{-1}
\]
where $\Gamma^{(k)} = \diag(\tilde \gamma_i^{(k)})$ and each element $\tilde \gamma_i^{(k)} \leq \gamma^{(k)}$.

First, note that by construction, since 
\[
\|\bar \bS-\bx^{(k)}\mb 1^T\|_{2,\infty} \leq D_4, \quad \|(I+A^T\Gamma^{(k)})^{-1}\|_2 \leq  \|(I+A^T\Gamma^{(0)})^{-1}\|_2
\]
are bounded above by constants, then 
\[
\|\bZ^{(k)}\|_\infty \leq \frac{c}{c+k} C_1
\]
for $C_1 = D_4\|(I+A^T\Gamma^{(0)})^{-1}\|_2 $.

First find constants $C_3 $, $C_4$, and $\bar k$ such that
\begin{equation}
\frac{C_3}{k} \leq \mb 1^T \mathbf P^{(k)} \beta \leq \frac{C_4}{k}, \quad \forall k>\bar k,
\label{eq:boundx_helper}
\end{equation}
and such constants always exist, since
by assumption, there exists some $a_{\min} > 0$, $a_{\max}<1$ and some $k'$ where
\[
a_{\min} <q\mb P^{(k')} \beta < a_{\max} \Rightarrow \frac{a_{\min}}{q \gamma_{\max}} \leq (I+A^T\Gamma^{(k')})^{-1} \beta \leq \frac{a_{\max}}{q \gamma_{\min}}
\]
where 
\[
\gamma_{\min} = \min_i \frac{c}{c+k'+\omega^{(k')}_i}, \qquad \gamma_{\max} = \frac{c}{c+k'}.
\]
Additionally, for all $k > c+1$,
\[
\frac{c}{2k}\leq \frac{c}{c+k+1} \leq \Gamma^{(k)}_{ii} \leq \frac{c}{c+k} \leq \frac{c}{k}.
\]
Therefore taking
\[
C_3 = \frac{ca_{\min}}{2q \gamma_{\max} }, \qquad C_4 = \frac{c a_{\max}}{q\gamma_{\min}}, \qquad \bar k = \max\{k',c+1\}
\]
satisfies \eqref{eq:boundx_helper}.

Now define
\[
C_2 = \max\{|\bx^{(1)}|,4cq C_1 \|A\|_\infty, 4C_3, 4C_4\}.
\]
We will now inductively show that $|\bx^{(k)}|\leq \frac{C_2}{k}$. From the definition of $C_2$, we have the base case for $k = 1$:
\[
|\bx^{(1)}| \leq \frac{|\bx^{(1)}|}{1} \leq \frac{C_2}{k}.
\]
Now assume that $|\bx^{(k)}|\leq \frac{C_2}{k}$. Recall that
\[
\bx^{(k+1)} = \bx^{(k)}(1-\mb 1^T \mathbf P^{(k)} \beta) + \bar \bS\mathbf P^{(k)} \beta, \qquad \bar \bS = [\bar \bs_1,...,\bar\bs_q], \qquad \bs_i = -\sign(\bar \bx_i)
\]
and we denote the composite mixing term $\bar \gamma^{(k)} = \mb 1^T\mb P^{(k)} \beta$. 
We now look at two cases separately.
\begin{itemize}
    \item Suppose first that $\bar \bS = -\sign(\bx^{(k)}\mb 1^T)$, e.g. $\sign(\bar\bx_i) = \sign(\bx^{(k)})$ for all $i$. Then 
    \[
    \bar \bS \mb P^{(k)} \beta = -\sign(\bx^{(k)})\bar \gamma_k,
    \]
    and 
    \begin{eqnarray*}
    |\bx^{(k+1)}| &=&  |\bx^{(k)}(1-\bar\gamma^{(k)}) + \bar \bS\mathbf P^{(k)} \beta|\\
    &=&  |\bx^{(k)}(1-\bar\gamma^{(k)}) -\sign(\bx^{(k)})\bar\gamma^{(k)}|\\
    &=&  |\underbrace{\sign(\bx^{(k)})\bx^{(k)}}_{|\bx^{(k)}|}(1-\bar\gamma^{(k)}) -\underbrace{\sign(\bx^{(k)})\sign(\bx^{(k)})}_{=1}\bar\gamma^{(k)}|\\
    &=&  | |\bx^{(k)}|(1-\bar\gamma^{(k)}) -\bar\gamma^{(k)}|\\
    &\leq&  \max\{ |\bx^{(k)}|(1-\bar\gamma^{(k)}) -\bar\gamma^{(k)},
    \bar\gamma^{(k)} - |\bx^{(k)}|(1-\bar\gamma^{(k)}) 
    \}\\
    &\leq&  \max\Bigg\{ \underbrace{\frac{C_2}{k}(1-\frac{C_3}{k}) -\frac{C_3}{k}}_{(*)},
    \frac{C_4}{k} \Bigg\}\\
    \end{eqnarray*}
    and when $k \geq \frac{C_2}{C_3} \iff C_3 \geq \frac{C_2}{k}$,
    \[
    (*) \leq C_2\left(\frac{1}{k} -\frac{1}{k^2}\right) \leq \frac{C_2}{k+1}.
    \]
    Taking also $C_4 \leq \frac{C_2}{4}$,
    \begin{eqnarray*}
    |\bx^{(k+1)}| 
    \leq  \max\left\{ \frac{C_2}{k+1},
    \frac{C_2}{4k} \right\} \leq \frac{C_2}{k+1}\\
    \end{eqnarray*}
    for all $k \geq 1$.
    
    \item Now suppose that there is some $i$ where $\bar\bs_i = \sign(\bx^{(k)}\mb1^T)$. 
    Now since 
\[
\bar \bS = -\sign(\bx^{(k)}\mb 1^T + \bZ A^T)
\]
then this implies that 
$ |\bx^{(k)}| < (\bZ A^T)_i$.
But since 
\[
|(\bZ A^T)_i| \leq \|\bZ\|_\infty \|A\|_\infty q \leq \frac{c}{c+k}(C_1\|A\|_\infty q) \leq \frac{C_2}{4k}, 
\]
this implies that
\[
|\bx^{(k+1)}| \leq \frac{C_2}{4k}(1-\frac{C_3}{k}) + \frac{C_2}{4k} \leq \frac{C_2}{2k}\leq \frac{C_2}{k+1}, \quad \forall k > 1.
\]
\end{itemize}
Thus we have shown the induction step, which completes the proof.
\end{proof}

\begin{lemma}

There exists a finite point $\tilde k$ where for all $k > \tilde k$, 
\[
 \frac{c}{c+k}-\frac{C_4}{k^2} < |\xi_i| < \frac{c}{c+k}+\frac{C_4}{k^2}
\]
for some constant $C_4>0$.
\end{lemma}
\begin{proof}
Our goal is to show that 
\[
\gamma^{(k)} - \frac{C_4}{k^2} \leq \|\bZ\|_\infty \leq  \gamma^{(k)} + \frac{C_4}{k^2}
\]
for some $C_4\geq 0$, and for all $k \geq k'$ for some $k' \geq 0$.
Using the Woodbury matrix identity,
\[
\Gamma(I+A^T\Gamma)^{-1} = \Gamma\left(I - A^T(I+\Gamma A^T)^{-1} \Gamma\right)
\]
and thus
\[
\bZ^{(k)} = \bar \bS\Gamma -\underbrace{\left(\bx^{(k)}\mb 1^T \Gamma + (\bar \bS-\bx^{(k)}\mb 1^T)\Gamma A^T(I+\Gamma A^T)^{-1}\Gamma\right)}_{\mathbf B}.
\]
and thus
\[
   |\bar \bs_i \tilde \gamma_i| - \frac{C_3}{k^2} \leq |\xi_i^{(k)}| \leq |\bar \bs_i \tilde \gamma_i| + \frac{C_3}{k^2}
\]
where via triangle inequalities and norm decompositions,
\[
\frac{C_3}{k^2} = \max_i |\mathbf B_i| \leq \underbrace{|\bx^{(k)}|}_{O(1/k)} \gamma_k + D_4 \gamma_k^2 \|A\|_\infty (I+\Gamma^{(0)}A^T)^{-1} = O(1/k^2).
\]
Finally, since $\bar \bs_i\in \{-1,1\}$, then 
$|\bar \bs_i\tilde \gamma_i| = \tilde\gamma_i$, and in particular,
\[
\frac{c}{c+k+\omega_i}\leq \frac{c}{c+k}
\]
and
\[
\frac{c}{c+k+\omega_i}\geq \frac{c}{c+k+\omega_{\max}} = \frac{c}{c+k} - \frac{c}{c+k}\frac{\omega_{\max}}{c+k+\omega_{\max}} \geq \frac{c}{c+k}-\frac{c\omega_{\max}}{k^2}
\]
Therefore, taking $C_4 = c\omega_{\max}+C_3$ completes the proof.

\iffalse 
Then
\[
1-\frac{C_2}{k}\leq 1-\bx^{(k)}-\sum_j A_{ij}\xi_j\leq |\bx^{(k)}+\sum_j A_{ij}\xi_j-\bs_i| \leq 1+\bx^{(k)}+\sum_j A_{ij}\xi_j\leq 1+\frac{C_2}{k}.
\]
Therefore
\[
\frac{c}{c+k+\omega_i}(1-\frac{C_2}{k}) \leq |\xi_i| \leq  \frac{c}{c+k+\omega_i}(1+\frac{C_2}{k}).
\]
The upper bound simplifies as
\[
\frac{c}{c+k+\omega_i}(1+\frac{C_3}{k})\leq \frac{c}{c+k}(1+\frac{C_3}{k})\leq \frac{c}{c+k}+\frac{cC_3}{k^2}.
\]
We can further clean up the lower bound as
\[
 \frac{c}{c+k+\omega_i}= \frac{c}{c+k}\green{-}\frac{c\omega_i}{(c+k)(c+k+\omega_i)} \geq \frac{c}{c+k} - \frac{c\omega_i}{k^2}
 \]
so
\[
\frac{c}{c+k}-\frac{C_4}{k^2}\leq |\xi_i| \leq   \frac{c}{c+k}+\frac{C_4}{k^2}
\]
where $C_4 = \max\{c\omega_i,cC_3\}$.
Now we reexamine our need for  $|\bx^{(k)}| \geq \varepsilon$. Note that in this toy problem, whether $|\bx^{(k)}|\leq \varepsilon$ or not \emph{does not affect the method iterates at all}, and thus though $f(\bx^{(k)})$ is different when $|\bx^{(k)}|\leq \varepsilon$ or not, $|\bx^{(k)}|$ will not change its behavior even after $|\bx^{(k)}|\leq \varepsilon$.
This concludes the proof.
\fi
\end{proof}

\begin{lemma}
There exists some large enough $\tilde k$ where for all $k \geq \tilde k$, it must be that
\begin{equation}
\exists k'\geq k, \quad -\sign(\bx^{(k')})T(\bx^{(k')}) > \frac{\epsilon}{k'}.
\label{eq:lemma-helper-1}
\end{equation}
\end{lemma}

\begin{proof}
Define a partitioning $S_1\cup S_2 = \{1,...,q\}$, where
\[
S_1 = \{i : \xi_i > 0\}, \quad S_2 = \{j : \xi_j\leq 0\}.
\]
Defining $\bar \xi = \frac{c}{c+k}$,
\[
|\sum_{i=1}^q \beta_i \xi_i| = |\sum_{i\in S_1}\beta_i |\xi_i| -\sum_{j\in S_2}\beta_j|\xi_j|| \geq  \left(\bar \xi-\frac{C_4}{k^2}\right)\cdot\left|\sum_{i\in S_1}\beta_i-\sum_{j\in S_2}\beta_j\right|.
\]

By assumption, there does not exist a combination of $\beta_i$ where a specific linear combination could cancel them out; that is, suppose that there exists some constant $\bar \beta$, where for \emph{every} partition of sets $S_1$,$S_2$,
\[
0<\bar\beta :=\min_{S_1,S_2}  |\sum_{i\in S_1}\beta_i-\sum_{j\in S_2}\beta_j|.
\]
Then  
\[
|\sum_{i=1}^q \beta_i \xi_i|  \geq  \left(\frac{c}{c+k}-\frac{C_2}{k^2}\right)\bar\beta \geq \bar\beta\frac{\max\{C_2,c\}}{k}.
\]
Picking $\epsilon = \max\{C_2,c\}$ concludes the proof.
\end{proof}